\newtheorem{thm}{Theorem}
\newtheorem{lem}{Lemma}
\newtheorem{prop}{Proposition}
\newtheorem{cor}{Corollary}
{\theoremstyle{remark}
\newtheorem {Rem}{Remark}}
{\theoremstyle{definition}
}
                         \newtheorem{defn}{Definition}
\def\div{\mathrm{div}}
\newcommand{\cl}{\textrm{cl}}
 \newcommand{\mQ}{\mathbb  Q}
\def\fchar{\mathrm{char}}
\newcommand{\mC}{\mathbb C}
                                                   \newcommand{\CC}{\mathcal C}
\newcommand{\vf}{\varphi}
                                             \newcommand{\alb}{\mathrm{alb}}
\newcommand{\beq}{\begin{equation}}
\newcommand{\eeq}{\end{equation}}
\title[Torsion Points of small order]{Torsion points of small order on cyclic covers of $\mathbb P^1$. II}
\author {Boris M. Bekker}
 \address{ St.Petersburg State University, 7/9 Universitetskaya nab., St. Petersburg, 199034 Russia.}
\email{ bekker.boris@gmail.com}
\author {Yuri G. Zarhin}
\address{Pennsylvania State University, Department of Mathematics, University Park, PA 16802, USA}
\email{zarhin@math.psu.edu}
\thanks{The second named author (Y.Z.) was partially supported by Simons Foundation Collaboration grant   \# 585711  and the Travel Support for Mathematicians Grant MPS-TSM-00007756 from the Simons Foundation. Most of this work was done in August-September 2025 during his stay at the Max-Planck-Institut f\"ur Mathematik (Bonn, Germany), whose hospitality and support are gratefully acknowledged.}
\begin{document}
\begin{abstract}
Let $d\geq 2$ be an integer, $K_0$  a perfect field such that $\fchar(K_0) \nmid d$,  $n > d$ an integer prime to $d$, $f(x)\in K_0[x]$ a degree $n$ monic polynomial without repeated roots,  and $\mathcal{C}_{f,d}$ a smooth projective  model of  the affine curve $y^d=f(x)$. Let $J(\mathcal{C}_{f,d})$ be the Jacobian of the $K_0$-curve $\mathcal{C}_{f,d} $. We identify $\mathcal{C}_{f,d}$ with its canonical image in $J(\mathcal{C}_{f,d})$ (such that the infinite point of $\mathcal{C}_{f,d}$ goes to the zero of the group law on $J(\mathcal{C}_{f,d})$).

We say that an integer $m>1$ is $(n,d)$-reachable over $K_0$ if there  exists a polynomial $f(x)$ as above such that $\mathcal{C}_{f,d}(K_0)$ contains a torsion point of order $m$.

Earlier we proved  that if $m$ is  $(n,d)$-reachable, then either $m=d$ or $m \geq n$ (in addition, both $d$ and $n$ are  $(n,d)$-reachable).

In the present paper we prove the following.

\begin{itemize}
\item
 If $n<m<2n$ and if $m$ is $(n,d)$-reachable  over $K_0$, then either $d|m$ or $m \equiv n \bmod d$.
 \item
If  either
$\fchar(K_0)=0$ or $K_0$ in infinite and
$\fchar(K_0)>n$, then  $d\cdot  [(n+d)/d]$ is $(n,d)$-reachable  if and only if $n-(d-1)\cdot [(n+d)/d]\ge 0$.
\item
If $\fchar(K_0)=0$, then $n+d$ is $(n,d)$-reachable  if and only if $d^2-2d<n$.
\item
If $d=2$ (the hyperelliptic case) and $\fchar(K_0)=0$,  then $m$ is  $(n,d)$-reachable  if $n+1 \le m \le 2n+1$.
(The case when $n \le m \le 3(n-1)/2$ was done earlier by E.V. Flynn.)
\end{itemize}

\end{abstract}
\subjclass[2010]{14H40, 14G27, 11G10, 11G30}

\keywords{Cyclic covers, Jacobians, torsion points}
\dedicatory{Dedicated to Leonid Makar-Linanov on the occasion of his 80th birthday}
\maketitle
\section{Introduction}
Let $K$ be an algebraically closed field,
 $\mathcal C$ a smooth irreducible projective curve of positive genus $g$ over $K$, and $J(\mathcal C)$ the jacobian variety of $\mathcal C$.
For any point $O\in \mathcal C(K)$, the map $P\mapsto \cl((P)-(O))$, where $\cl((P)-(O))$ is  the linear equivalence class of the degree zero  divisor $(P)-(O)$,
gives rise to an embedding $\alb_O:\mathcal C\to J(\mathcal C)$ sending $O$ to the zero element of $J(C)$.

 We identify $\mathcal C$ with its image $\alb_O(\mathcal C)\subset J(\mathcal C)$. Let $m$ be a positive integer. We
 say that a point $P$ of $\CC(K)$ is a torsion point of order $m$ if its image $\alb_O(P)$
has order $m$ in the group $J(\CC)(K)$. Clearly, $O$ is the only point of order $1$.

Manin and Mumford conjectured and  Michel Raynaud \cite{Raynaud} proved that the set of torsion points in $\CC(K)$ is finite if $\fchar(K)=0$ and $g>1$.
(See also \cite{Ribet,Tsermias,Boxall}.)

It is  natural  to ask which values can take $m$ for a fixed embedding $\alb_O$ of $\CC$.
More precisely:
\begin{itemize}\item
    does  there exist a curve $\mathcal C$ having a   point of order $m?$

    \item if such a curve exists, then how many   points of order $m$ may $\mathcal C$ contain?
  \end{itemize}

In the present paper we consider the case  where $\CC$   is a cyclic cover of the projective line.
More precisely, let $d$ and $n$ be positive integers. In what follows we assume that $\CC=\CC_{f,d}$ is a smooth projective model of the plane affine smooth curve $C_{f,d}:y^d=f(x)$, where
\begin{equation} \label{restr1} 1< d< n, \quad (n,d)=1, \quad \fchar(K) \nmid d,\end{equation}
 $f(x) \in K[x]$ is  a degree $n$ polynomial without repeated roots, and $O=O_{f,d}$ is the {\sl infinite} point of $\CC_{f,d}$. Then $x$ and $y$ are rational functions on $\CC_{f,d}$, whose only pole is $O$ (see \cite{Gal}). The regular map
\beq\label{CC}\CC_{f,d}\setminus \{O\} \to C_{f,d}, \quad P \mapsto (x(P),y(P))\eeq
 is a biregular isomorphism of algebraic varieties over $K$. This implies that every rational function on $\mathcal C_{f,d}$ whose    only pole is $O$, can be represented as a polynomial in $x,y$ with coefficients in $K$.
 It is well known that  the genus $g$ of $\mathcal C_{f,d}$ is $(n-1)(d-1)/2$.  We identify $\CC_{f,d}\setminus \{O\}$ and $C_{f,d}$, using  isomorphism \eqref{CC}.
  In particular, we will talk freely about abscissas and ordinates of $K$-points on $\CC_{f,d}\setminus \{O\}$.

 It was proven by J. Boxall and D. Grant in \cite{Box1} that
for $g=2$ there are no points of order $3$ or $4.$ (See also their paper with  F.  Lepr\'evost \cite{Box2},  where points of order 5 on curves of genus 2 are discussed.)
  The second named author proved in  \cite[Theorem 2.8]{Zarhin} that   $\mathcal \CC_{f,2}(K)$  does {\sl not} contain a point of order $m$ if $g \ge 2$ and
 $3\leq m\leq 2g=n-1$.
These results
were generalized by the authors in \cite{BZ} to the case of hyperelliptic curves of arbitrary
genus $g\geq 2$ with torsion points of order $2g + 1$. In particular, we proved that if
$\fchar(K) = 2g + 1$, then $\mathcal C(K)$ contains at most $2$ points of order $2g + 1$.
(The case of $g=2, \ \fchar(K) =5$ was done earlier  in \cite{Box2}.)

 V. Arul \cite{Arul} listed all the torsion points on  Catalan curves $y^d=x^n+1$  and ``generic''   superelliptic curves in characteristic 0.
 (The case of `'generic'' hyperelliptic curves was done earlier by B. Poonen and M. Stoll \cite{PS}.)
In \cite[Th.1 and  Prop.1 and its Proof on p.10]{BZR} we proved the following assertion.

\begin{thm}
\label{thm0}
Suppose that $f(x)$ is a monic polynomial.
 Let $m>1$ be an integer and $P \in \CC_{f,d}(K)$ be a torsion point of order $m$.  Then:
 \begin{itemize}
  \item
  Either $m=d$ or $m \ge n$.
  \item
  $m=d$ if and only if $y(P)=0$, i.e., $x(P)$ is a root of $f(x)$.  In particular, the number of torsion points of order $d$ in $\CC_{f,d}(K)$  is $n$.
  \item
  If  $m \ne d$ and $P=(a,c)$, then $c \ne 0$ and  $\gamma P=(a,\gamma c)$ is also a point of order $m$ in $\CC_{f,d}(K)$
   for all $\gamma \in \mu_d$.
  In particular, the number of torsion points of order $m$ in $\CC_{f,d}(K)$  is divisible by $d$.
  \end{itemize}
\end{thm}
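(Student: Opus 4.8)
The plan is to exploit the unique point $O$ over $x=\infty$. Since $(n,d)=1$, the function $x$ has degree $d$ and $y$ has degree $n$, each with a single pole, namely $O$, so that $\mathrm{ord}_O(x)=-d$ and $\mathrm{ord}_O(y)=-n$; moreover the monomials $x^iy^j$ with $i\ge 0$ and $0\le j\le d-1$ have pairwise distinct pole orders $di+nj$, and (as recalled above) every rational function on $\CC_{f,d}$ regular away from $O$ is a $K$-linear combination of such monomials. A point $P=(a,c)\ne O$ is a torsion point of order $m$ precisely when there is a rational function $h$ with $\div(h)=m(P)-m(O)$; writing $h=\sum_{j=0}^{d-1}P_j(x)y^j$ and noting that its term of largest pole order cannot cancel, we get $\mathrm{ord}_O(h)=-\max\{d\deg P_j+nj:P_j\ne 0\}$, which must equal $-m$.

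The first step is to analyse the case $m<n$, from which the first bullet and the \emph{only if} part of the second bullet will follow. If $m<n$, then $P_j=0$ for every $j\ge 1$ (such a term has pole order $\ge n$), so $h=P_0(x)$ is a polynomial in $x$ alone with $d\deg P_0=m$; in particular $d\mid m$. On $\CC_{f,d}$ one has $\div(x-\alpha)=\sum_{k=1}^d(Q_k)-d(O)$ with the $Q_k$ distinct when $f(\alpha)\ne 0$, and $\div(x-\alpha)=d(R)-d(O)$ with $R$ the ramification point over $\alpha$ when $f(\alpha)=0$. Hence an equality $\div(P_0(x))=m(P)-m(O)$ is possible only if $P_0$ is, up to a constant, a power of a single linear factor $x-a$, and, since $d\ge 2$ rules out the totally split fibre, only if $f(a)=0$; then $a=x(P)$, $y(P)=c=0$, $P$ is the ramification point over $a$, and $\div(x-a)=d(P)-d(O)$. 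Thus $d\cdot[(P)-(O)]=0$, so the order $m$ of $P$ divides $d$; together with $d\mid m$ this forces $m=d$. Conversely, if $y(P)=0$ then $P$ lies over some root $a$ of $f$, the identity $\div(x-a)=d(P)-d(O)$ gives $d\cdot[(P)-(O)]=0$, and if the order $m'$ of $P$ were a proper divisor of $d$ then $m'<d<n$ and the previous argument would yield $d\mid m'$, a contradiction; hence the order is exactly $d$. Since $f$ has $n$ distinct roots, there are exactly $n$ such points, which completes the second bullet.

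For the third bullet I would invoke the automorphism $\sigma_\gamma\colon(x,y)\mapsto(x,\gamma y)$ of $\CC_{f,d}$ for $\gamma\in\mu_d$. As $O$ is the only point over $x=\infty$, $\sigma_\gamma$ fixes $O$; therefore it is compatible with $\alb_O$ and induces a group automorphism of $J(\CC_{f,d})$, which in particular preserves the order of torsion points. Hence $\gamma P=(a,\gamma c)$ has the same order $m$ as $P=(a,c)$. When $m\ne d$ the second bullet gives $c\ne 0$, so the points $(a,\gamma c)$, $\gamma\in\mu_d$, are pairwise distinct; thus the set of torsion points of order $m$ is a disjoint union of $\mu_d$-orbits of size $d$, and its cardinality is divisible by $d$.

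I expect the main obstacle to be the divisor computation in the second paragraph: one must pin down exactly how $x-\alpha$ factors on the smooth model $\CC_{f,d}$ (totally split when $f(\alpha)\ne 0$, totally ramified when $f(\alpha)=0$), and then argue that $\div(P_0(x))=m(P)-m(O)$ can hold only when $P_0$ is concentrated at a single ramification value. The remaining ingredients — the numerical-semigroup bookkeeping of pole orders at $O$ and the functoriality of the Jacobian under $\sigma_\gamma$ — are routine.
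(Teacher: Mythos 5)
Your proof is correct, and it uses exactly the machinery this paper sets up for Theorem~\ref{thm0} (which is quoted here from \cite{BZR} rather than reproved): the pole-order bookkeeping $d\deg P_j+nj$ at $O$ with no cancellation because $(n,d)=1$ (as in Proposition~\ref{poleorder}), the divisor computation $\div(x-a)=\sum_{\gamma\in\mu_d}(\gamma P)-d(O)$ of Lemma~\ref{l1} specialised to split versus ramified fibres, and the $\mu_d$-equivariance of $\alb_O$. No gaps; the minor imprecision that principality of $m(P)-m(O)$ only forces the order to \emph{divide} $m$ is harmless, since you only use the implication in the direction you need.
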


\begin{Rem}
 The assertions  of Theorem \ref{thm0} remain  true for arbitrary not necessarily monic  $f(x)$, see Remark \ref{monic} below.

\end{Rem}

We will need the following  useful corollary of Theorem
\ref{thm0}.

\begin{cor}
\label{O2n}
Let $m$ be a positive integer and $P \in \CC_{f,d}(K)$ be a torsion point.
Suppose that
$$P \ne O, \quad y(P) \ne 0,$$
and $m(P)-m(O)$ is a principal divisor on $\CC_{f,d}$.
Assume additionally that either $m$ is a prime, or $m<2n$, or $m$ is an odd integer that is strictly less than $3n$.
Then $P$ has order $m$.
\end{cor}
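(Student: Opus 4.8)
The plan is to argue through the \emph{actual} order of $P$ and then squeeze it between the hypotheses on $m$ and the gap theorem (Theorem~\ref{thm0}). Write $m_0$ for the order of $P$, i.e.\ the order of $\alb_O(P)=\cl((P)-(O))$ in $J(\CC_{f,d})(K)$. Since $P\ne O$ and $\CC_{f,d}$ has positive genus, $\alb_O(P)\ne 0$, so $m_0>1$. On the other hand, the hypothesis that $m(P)-m(O)$ is a principal divisor means precisely that $m\cdot\alb_O(P)=0$, hence $m_0\mid m$ (in particular $m>1$). The goal is then to show $m_0=m$.

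First I would use Theorem~\ref{thm0} to keep $m_0$ away from the small exceptional value $d$. Applying that theorem (together with the Remark following it, which removes the monicity assumption on $f$) with $m_0$ in place of $m$: since $P$ is a torsion point of order $m_0>1$, either $m_0=d$ or $m_0\ge n$, and moreover $m_0=d$ holds if and only if $y(P)=0$. As we have assumed $y(P)\ne 0$, the first alternative is excluded, so
\[
m_0\ge n .
\]

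It then remains to rule out $m_0<m$, i.e.\ to check that no \emph{proper} divisor of $m$ can be $\ge n$ under each of the three hypotheses on $m$. If $m$ is prime, its only divisors are $1$ and $m$, so $m_0>1$ forces $m_0=m$. If $m<2n$, any proper divisor $m_0$ of $m$ satisfies $m_0\le m/2<n$, contradicting $m_0\ge n$; hence $m_0=m$. Finally, if $m$ is odd and $m<3n$, then every divisor of $m$ is odd, so for a proper divisor $m_0$ the complementary divisor $m/m_0$ is an odd integer $\ge 3$; thus $m_0\le m/3<n$, again contradicting $m_0\ge n$, and $m_0=m$. In every case $P$ has order $m$, which is the assertion.

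No serious obstacle is anticipated: once the dichotomy ``$m_0=d$ or $m_0\ge n$'' is in hand, the rest is elementary. The only point that needs a little care is the odd case, where one must exploit that a proper divisor of an odd number is smaller by a factor of at least $3$ (a factor of $2$ would not suffice), which is exactly why the allowed range there is $3n$ rather than $2n$.
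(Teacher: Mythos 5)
Your proof is correct and follows essentially the same route as the paper's: let the true order divide $m$, use Theorem~\ref{thm0} together with $y(P)\ne 0$ to exclude the value $d$, and bound a proper divisor by $m/2<n$ (resp.\ $m/3<n$ in the odd case). The only cosmetic difference is that you establish $m_0\ge n$ up front and you spell out why oddness yields the factor $3$, which the paper leaves implicit.
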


\begin{proof}

 Indeed, let $k$ be the order of $P$. Clearly,
 $$k>1,  \quad k |m.$$
 If $m$ is a prime, then $k=m$ and we are done. Suppose that $m$ is not a prime.

 Suppose that $m<2n$.
 If $k \ne m$, then
 $$k \le \frac{m}{2} <\frac{2n}{2}=n.$$
 So, $1<k<n$. By Theorem \ref{thm0}, $k=d$, which is not the case, since $y(P) \ne0$. The obtained contradiction implies that $m$ is an odd integer that is strictly less than $3n$.

 If $k<m$, then
 $$k \le \frac{m}{3}<\frac{3n}{3}=n.$$
 Hence, $k<n$ and, as above, $k=d$, which is not the case. The obtained contradiction proves that $k=m$ and we are done.

\end{proof}

Notice that  \cite[Theorem 1]{BZR} also contains the list of polynomials $f(x)$ such that the corresponding curve $\CC_{f,d}$ has a torsion $K$-point of order $n$. (See also \cite[Lemma 5.2.66]{Arul}.)

Let $K_0$ be a perfect subfield of $K$ and $f(x) \in K_0[x]$. Then the curve $\CC_{f,d}$ is naturally defined over $K_0$;
in addition,
$$O=O_{f,d} \in \CC_{f,d}(K_0)\subset \CC_{f,d}(K).$$

\begin{defn}
 Let $m>1$ be an integer. We say that $m$ is $(n,d)$-reachable over $K_0$
 if there exists a degree $n$ monic polynomial $f(x) \in K_0[x]$ without repeated roots such that
 the corresponding $\CC_{f,d}(K_0)$ has a point of order $m$.

\end{defn}

\begin{Rem}
\label{reach}
\begin{itemize}
\item[(i)]
If $K_1$ an overfield of $K_0$ and  $m$ is $(n,d)$-reachable over $K_0$, then it obviously
 $(n,d)$-reachable over $K_1$ as well. In particular, if $m$ is $(n,d)$-reachable over the field $\mQ$ of rational numbers
 if and only it is $(n,d)$-reachable over any field of characteristic zero.
 \item[(ii)]
First assertion of Theorem \ref{thm0} may be restated as an assertion that if $m$ is $(n,d)$-reachable over  $K$, then either $m=d$ or $m \ge n$. It follows from
\cite[Examples 1 and 2 on p. 11-12]{BZR} that $n$ is $(n,d)$-reachable over any $K_0$.
\item[(iii)] We claim that $m=d$ is $(n,d)$-reachable over any $K_0$. Due to Theorem \ref{thm0} it suffices to prove the existence of a monic degree $n$ polynomial  $f(x)\in K_0[x]$ without repeated roots that has a root in $K_0$.  If  $\fchar(K_0)=0$, then $f(x)=\prod_{i=1}^n (x-i)$
does the trick.
If $\fchar(K_0)=p>0$, then we may take $f(x)=x \cdot h(x)$, where
$h(x)\in \mathbb F_p[x]\subset K_0[x]$ is any degree $(n-1)$ monic polynomial
(with a nonzero constant term)
that is irreducible over  the prime finite field $\mathbb F_p$ of characteristic $p$.
\end{itemize}
\end{Rem}

If $K$ is the field $\mC$ of complex numbers and $K_0$ is the field of rational numbers $\mQ$ then results of E.V.  Flynn, F. Lepr\'evost, Q. Gendron
\cite{Flynn,Lep,Gendron}  for hyperelliptic curves (i.e., when $n$ is odd and $d=2$) may be restated as follows.

\begin{itemize}
\item
If  $m$ is an integer such that $n \le m \le 3(n-1)/2$ then $m$ is $(n,2)$-reachable over $\mQ$
(\cite[p. 435-436]{Flynn}.
\item
The number $(n-1)n$ is $(n,2)$-reachable over $\mQ$ (\cite[p. 53 and Lemme 3.3 on p. 56]{Lep}).
\item
If $m>n$ is an integer then it is $(n,2)$-reachable over $\mC$ (\cite[Th. 2.3]{Gendron}).
\end{itemize}

Here are main results of the present paper.

\begin{thm}[Main Theorem]
Let $K_0$ be a perfect subfield of $K$.
\label{mainT}
\begin{enumerate}
 \item[(1)]
 Let $m$ be an integer such that
$$1<m < nd.$$
Let us define the nonnegative integer $k:=[m/n]<d$, i.e., $k$ is the only integer such that
$$kn \le m<(k+1)n.$$
If $P$ is a point of order $m$ in $\CC_{f,d}(K)$ such that $1\le m<nd$,
then there exists a nonnegative $j\leq k$ such that $m\equiv jn\mod d$.

 \item[(2)]
 Let $m_0$ be the integer that lies strictly between $n$ and $n+d$ and is divisible by $d$, i.e.,
$$m_0:=d \cdot [(n+d)/d].$$
Let us put
$$\ell_0:=[(n+d)/d]=m_0/d.$$
If $n-m_0+\ell_0 <0$, then $m_0$ is not $(n,d)$-reachable  over $K$.
 \item[(3)]
 Let $m>n$ be an integer that  is divisible by $d$, i.e.,
 $\ell:=m/d$ is an integer.

 Assume that
  $$n-m+\ell \ge 0.$$
 If $K_0$ is infinite, then $m$ is $(n,d)$-reachable over $K_0$ if either $\fchar(K_0)=0$ or $\fchar(K_0)> n$.

\item[(4)]
Suppose that $\fchar(K_0)=0$. Then the following conditions are equivalent.
\begin{itemize}
 \item[(a)]
 $m_1:=n+d$ is $(n,d)$-reachable over $K_0$.
 \item[(b)]
 $d^2-2d<n.$
 \end{itemize}
\item[(5)]
Suppose that $\fchar(K_0)=0$. Let $e$ be a positive integer such that
$$ ed^2-(e+1)d<n.$$
 Then $n+ed$ is $(n,d)$-reachable over $K_0$.
 \end{enumerate}
 \end{thm}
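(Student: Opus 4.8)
The plan is to realize $m=n+ed$ as the \emph{exact} order of an explicit point $P$ on a suitably constructed curve $\CC_{f,d}$, by producing a function on $\CC_{f,d}$ of the form $g=y\,p(x)+a(x)$ with $\mathrm{div}(g)=m(P)-m(O)$, and then invoking Corollary~\ref{O2n} to promote ``the divisor is principal'' to ``the order equals $m$''. Concretely, I would look for a monic $p(x)\in K[x]$ of degree $e$ with $p(0)\ne 0$ and a polynomial $a(x)\in K[x]$ with $a(0)\ne 0$ and $\deg a\le e+[n/d]$, and then \emph{define}
$$f(x):=\frac{x^{m}+(-1)^{d}a(x)^{d}}{p(x)^{d}},\qquad P:=\bigl(0,\,-a(0)/p(0)\bigr).$$
Under these conditions $g$ has its only pole at $O$, of order exactly $m$ (this uses $\deg p=e$ and $d\deg a<m$, both of which follow from $\deg a\le e+[n/d]$), so $\deg\bigl(\mathrm{div}(g)_{0}\bigr)=m$; and every zero of $g$ lies over $x=0$, since if $g(\alpha_{0},\beta_{0})=0$ then $p(\alpha_{0})\ne 0$ (otherwise $a(\alpha_{0})=0$ and $\alpha_{0}^{m}=f(\alpha_{0})p(\alpha_{0})^{d}=0$, contradicting $p(0)\ne 0$), so $\beta_{0}=-a(\alpha_{0})/p(\alpha_{0})$ together with $\beta_{0}^{d}=f(\alpha_{0})$ gives $a(\alpha_{0})^{d}=\alpha_{0}^{m}+(-1)^{d}a(\alpha_{0})^{d}$, i.e.\ $\alpha_{0}=0$. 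Since $a(0)\ne 0$ one has $f(0)=(-1)^{d}a(0)^{d}/p(0)^{d}\ne 0$, so $x=0$ is unramified, and among the $d$ points over it $g$ vanishes at exactly one, namely $P$ (which indeed lies on $\CC_{f,d}$ because $y(P)^{d}=f(0)$). Hence $\mathrm{div}(g)_{0}=m(P)$ and $\mathrm{div}(g)=m(P)-m(O)$.

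It then remains to produce such $p$ and $a$ for which $f$ is a monic polynomial of degree $n$ without repeated roots. For $f$ to be a polynomial one needs $p(x)^{d}\mid x^{m}+(-1)^{d}a(x)^{d}$, i.e.\ $a(x)^{d}\equiv(-1)^{d+1}x^{m}\pmod{p(x)^{d}}$. Because $p(0)\ne 0$, the class of $(-1)^{d+1}x^{m}$ is a unit of $K[x]/(p(x)^{d})$, and since $K$ is algebraically closed and $\fchar(K)=0\nmid d$, every unit of this ring is a $d$-th power; writing it as $\bar a^{\,d}$ and lifting yields $a\in K[x]$ with $\deg a\le d\deg p-1=de-1$. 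The arithmetic heart of the matter is the inequality $de-1\le e+[n/d]$. Writing $n=d[n/d]+r$ with $1\le r\le d-1$, the hypothesis $ed^{2}-(e+1)d<n$ reads $de(d-1)<n+d=d\bigl([n/d]+1\bigr)+r$; since $de(d-1)$ is divisible by $d$ and $0<r<d$, this forces $de(d-1)\le d\bigl([n/d]+1\bigr)$, that is, $\deg a\le de-1\le e+[n/d]$. One then checks $d\deg a<m$, so $x^{m}+(-1)^{d}a^{d}$ is monic of degree $m$ and $f=(x^{m}+(-1)^{d}a^{d})/p^{d}$ is monic of degree $m-de=n$, as needed.

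The remaining point --- which I expect to be the genuine difficulty --- is to choose the roots of $p$ (together with the $d$-th-root-of-unity twists implicit in the choice of $a$, and, when $de-1<e+[n/d]$, the leftover coefficients of $a$) so that in addition $f$ is separable and $a(0)\ne 0$. Non-separability of $f$ and the vanishing of $a(0)$ are proper Zariski-closed conditions on these parameters, so, $K$ being infinite, it is enough to exhibit one admissible configuration. A convenient candidate is $p(x)=(x-1)^{e}$, for which $a(x)=\omega\,T(x)$ with $\omega^{d}=(-1)^{d+1}$ and $T(x)=\sum_{k=0}^{de-1}\binom{m/d}{k}(x-1)^{k}$ (the truncated binomial series for $(1+(x-1))^{m/d}$), so that $f=(x^{m}-T(x)^{d})/(x-1)^{de}$; here $a(0)=\omega(-1)^{de-1}\binom{m/d-1}{de-1}\ne 0$ because $m/d-1\notin\mathbb Z$, and $f(1)=d\binom{m/d}{de}\ne 0$, so it remains only to verify separability of $f$ for this $p$ (or for a small perturbation of it), the edge case $de-1=e+[n/d]$ calling for slightly more bookkeeping. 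Granting this, $\CC_{f,d}$ satisfies the standing hypotheses, $P\ne O$, $y(P)\ne 0$, and $m(P)-m(O)$ is principal; finally $m=n+ed<2n$ when $d\ge 3$ (from $2ed\le ed(d-1)<n+d<2n$), while for $d=2$ the integer $n$ is odd, so $m=n+2e$ is odd and $m\le 2n+1<3n$. In either case Corollary~\ref{O2n} applies and shows $P$ has order exactly $m$, so $n+ed$ is $(n,d)$-reachable over $K$.
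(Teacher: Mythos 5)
Your proposal addresses only part (5) of the Main Theorem (which, for $e=1$, also gives the implication (b)$\Rightarrow$(a) of part (4)). Parts (1), (2), (3) and the converse direction of (4) are not touched at all: part (1) requires the analysis of pole orders $M=td+kn$ of functions regular off $O$ (Proposition~\ref{poleorder}); parts (2) and (3) require the characterization $f=A(x-a)^m+v(x)^d$ with $\deg v=m/d$ and the degree count $\deg q=n-m+\ell\ge 0$ (Propositions~\ref{pointorder m} and \ref{deg n}) together with a separability argument; and (4)(a)$\Rightarrow$(b) requires the lower bound $\deg v\ge d-1$ coming from the non-vanishing of the generalized binomial coefficients $\binom{m_1/d}{k}$ (Theorem~\ref{nPluSd}). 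As a proof of the stated theorem the proposal is therefore very incomplete.

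For part (5) itself, your framework is sound and is essentially the paper's construction after the translation $x\mapsto x-1$: your $g=y\,p(x)+a(x)$ with $p(x)=(x-1)^e$ and $a=\omega T(x)$ is the paper's $x^ey-\lambda v(x)$ with $v=V_{r,ed}$, the norm computation, the pole-order bookkeeping, the degree inequality $de-1\le e+[n/d]$ (equivalent to $ed^2-(e+1)d<n$), and the final appeal to Corollary~\ref{O2n} (split into $d\ge 3$, $m<2n$ versus $d=2$, $m$ odd, $m\le 2n+1$) all match. The genuine gap is exactly where you flag it: separability of $f$. Your appeal to ``non-separability is a proper Zariski-closed condition'' is circular, because properness of that closed condition is precisely the assertion that \emph{some} member of the family is separable, and you never verify it for your candidate. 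This is the real content of the paper's Section~\ref{powerSeries}: one shows that a repeated nonzero root $\beta$ of $F=(1+x)^m-V_{r,E}(x)^d$ would have to be a common root of $F$ and of $V_{r,E}(x)^{d-1}\binom{r-1}{E}x^{E-1}$ (using the identities $V_{r,E}'=rV_{r-1,E-1}$ and $V_{r,E}=(1+x)V_{r-1,E-1}+\binom{r-1}{E}x^{E-1}$), forcing $\beta=-1$, and then rules this out by a $p$-adic valuation argument showing $V_{r,E}(-1)\ne 0$ for any prime $p\mid d$. Without this (or an equivalent) argument, the existence of a separable $f$ in your family --- and hence the whole of part (5) --- is unproven.
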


 \begin{Rem}
 \label{div2n}
 Let $m$ be an integer such that
  $n<m<2n$.
  Suppose that $m$ is $(n,d)$-reachable over $K$.
 It follows from
 Theorem \ref{mainT}(1) that either $m$ is divisible by $d$ or $m \equiv n \bmod d$. In particular,
 \begin{itemize}
 \item
 every integer  that lies strictly between
 $n$ and $m_0 =d \cdot [(n+d)/d]$ is {\sl not} $(n,d)$-reachable over $K$;
 \item
 every integer  that lies strictly between $m_0$ and $m_1=n+d$  is {\sl not} $(n,d)$-reachable over $K$.
 \end{itemize}
 That is why it is natural to address the question (as   in Theorem \ref{mainT}(2,4))  whether $m_0$ and $m_1$ are $(n,d)$-reachable over $K$.
 \end{Rem}

The following assertion contains the result of Flynn quoted above (our proof is completely different).
 \begin{cor}
 \label{hyperelliptic}
 Suppose that $d=2$. Then every integer $m$ that lies between $n+1$ and $2n+1$ is $(n,2)$-reachable over any
 field $K_0$ of characteristic 0, including $K_0=\mQ$.
 \end{cor}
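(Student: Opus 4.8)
The plan is to split according to the parity of $m$ and to invoke parts (3) and (5) of Theorem \ref{mainT}; the coprimality hypothesis $(n,d)=1$ together with $d=2$ forces $n$ to be odd, which is exactly what makes the two cases dovetail into the full interval.

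For \emph{even} $m$ with $n+1\le m\le 2n$ I would apply Theorem \ref{mainT}(3): writing $\ell:=m/2$, the inequality $m\le 2n$ gives $n-m+\ell=n-m/2\ge 0$, and $m>n$ holds by assumption, so since $\fchar(K)=0$ the theorem yields that $m$ is $(n,2)$-reachable over $K$. Because $n$ is odd, the even integers in $[n+1,2n+1]$ are precisely those in $[n+1,2n]$, so this step covers every even value in the desired range, the left endpoint $m=n+1$ included.

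For \emph{odd} $m$ with $n+1\le m\le 2n+1$ I would apply Theorem \ref{mainT}(5). Since $n$ is odd and $m>n$, I can write $m=n+2e$ with $e\ge 1$ an integer; the bound $m\le 2n+1$ becomes $e\le (n+1)/2$. With $d=2$ the hypothesis $ed^2-(e+1)d<n$ of Theorem \ref{mainT}(5) reads $2e-2<n$, and indeed $2e-2\le(n+1)-2=n-1<n$, so it holds for every admissible $e$. Hence $n+ed=n+2e=m$ is $(n,2)$-reachable over $K$; the right endpoint $m=2n+1$ corresponds to $e=(n+1)/2$.

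Combining the two cases gives the corollary. I do not expect a genuine obstacle here: this is essentially bookkeeping on top of Theorem \ref{mainT}, and the only places deserving a second look are the two endpoints $m=n+1$ and $m=2n+1$, where the relevant inequalities $(n-1)/2\ge 0$ and $n-1<n$ are (nearly) tight but still valid.
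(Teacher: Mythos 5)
Your proof is correct and follows essentially the same route as the paper's: split by parity of $m$ (using that $n$ is odd since $(n,2)=1$), apply Theorem \ref{mainT}(3) to the even case via $n-m/2\ge 0$, and apply Theorem \ref{mainT}(5) to the odd case $m=n+2e$ via $2e-2\le n-1<n$. No gaps.
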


 \begin{proof}
 Since $(n,2)=(n,d)=1$, the integer $n$ is odd.
We have  $n+1 \le m \le 2n+1$. If $m$ is even, then $m \le 2n$ and
$$n-m+m/d = n-m/2\ge n-2n/2\ge 0.$$
Now the assertion follows from Theorem \ref{mainT}(3).

If $m$ is odd, then $m=n+2e$ with integer $e$  such that $1 \le e \le (n+1)/2$.
Then
$$ ed^2-(e+1)d=4e-2(e+1)=2e-2 \le 2 \cdot (n+1)/2 -2=n-1<n.$$
Hence, $ ed^2-(e+1)d<n$ and the result follows from Theorem \ref{mainT}(5).
 \end{proof}
The paper is organized as follows. Section \ref{prel} contains elementary results about divisors on $\CC_{f,d}$.
In  Section \ref{functions} we deal with rational functions on $\CC_{f,d}$  whose only pole is $O$,
and prove Theorem \ref{mainT}(1).
 In Section \ref{W^1}  we discuss torsion points, whose order is divisible by $d$.
In Section \ref{separ} we  prove Theorem \ref{mainT}(2,3). In Section \ref{order n+d points}
we discuss torsion points, whose   order is congruent to $n$ modulo $d$. Section \ref{powerSeries} contains auxiliary results about polynomials that we  use in Section
\ref{endMain},  proving  Theorem \ref{mainT}(4,5).

{\bf Acknowledgements}. We are grateful to the referee for useful comments.

 \section{Preliminaries}
\label{prel}
Let $d$ and $n$ be positive integers. In what follows we assume that conditions \eqref{restr1} are fulfilled.
Let $G=\mu_d$ be the multiplicative (order $d$) cyclic group of $d$th roots of unity in $K$.
Let \beq\label{f} f(x)=c_0\prod_{j=1}^n(x-w_j)\in K[x]\eeq
be a degree $n$  polynomial, where
$w_1,\ldots,w_n$ are distinct elements of $K$,   $c_0$ is a nonzero element of $K$, and $C_{f,d}: y^d=f(x)$ is the corresponding smooth plane affine curve over $K$.

  The projective closure $\bar C_{f,d}$  has exactly  one point at infinity, which may be singular. We  denote it by $\infty$. The group  $\mu_d$
 acts on $\bar C_{f,d}$ by
 $$(x,y) \mapsto (x, \gamma y) \quad \forall \gamma \in \mu_d;$$
 this action is fixing $\infty$.


Let $\mathcal C_{f,d}$ be the normalization of $\bar C_{f,d}$, which is a smooth projective model of $\bar C_{f,d}$. We have a map  $\Phi:\mathcal C_{f,d}\to \bar C_{f,d}$  which yields an isomorphism between $C_{f,d}$ and $\Phi^{-1}(C_{f,d})$.
Since $(n,d)=1$, $\Phi^{-1}(\infty)$ contains only one point $O$, and so the map $\Phi$ is bijective. In what follows we identify the points of $\mathcal C_{f,d}$ with the corresponding points of $\bar C_{f,d}$. The action of $\mu_d$ on $\bar C_{f,d}$ yields an action of $\mu_d$ on $\mathcal C_{f,d}$  such that the quotient $\mathcal C_{f,d}/\mu_d$ is isomorphic to the projective line $\mathbb P^1$. We have a cover $\mathcal C_{f,d}\to \mathbb P^1$ of degree $d$ with $n+1$ ramification points. By the Hurwitz formula,  the genus $g$ of $\mathcal C_{f,d}$ is $(n-1)(d-1)/2$.

\begin{Rem}
If $\fchar(K)\ne 2$ and $d=2$, then $n$ is odd and $\CC_{f,d}=\CC_{f,2}$ is  a   hyperelliptic  curve of genus  $g=(n-1)/2$, while $O$ is  one of the {\sl Weierstrass points} of $\CC_{f,2}$;
it is well known that the torsion points of order $2$ in $\CC_{f,d}(K)$ are  the remaining (different from $O$) $(2g+1)$ Weierstrass points on $\CC_{f,d}(K)$ (when $n>3$).

Notice that if $d>2$, then $\CC_{f,d}$ is {\sl not} hyperelliptic \cite[Lemma 2.2.3]{Arul}.
\end{Rem}

\begin{Rem}
 \label{monic}

 Recall that $c_0$ is the (nonzero) leading coefficient of our degree $n$ polynomial $f(x)\in K_0[x]$ and $O=O_{f,d}$ is the infinite point of $\CC_{f,d}$. Sometimes it is convenient to assume that $f(x)$ is monic, i.e., $c_0=1$. We can do it without loss of generality, using the following elementary observation.

 Choose nonzero integers $i$ and $j$   such that $d i+nj=1$ (recall that $n$ and $d$ are relatively prime. Then the polynomial
 $$h(x):= c_0^{-di} f(c_0^{-j} x) \in K_0[x]$$
 is a  degree $n$ polynomial without repeated roots, whose leading coefficient is
 $$c_0^{-di} \cdot c_0 \cdot  (c_0^{-j})^n=c_0^{-di+1 -nj}=c_0^{1-di-nj}=c_0^0=1.$$
 This means that $h(x)$ is a monic polynomial.
 Let us consider the corresponding plane affine smooth $K_0$-curve
 $$C_{h,d}: y^d =h(x).$$
 Then there is a biregular isomorphism of plane affine $K_0$-curves
 $$\Psi_0: C_{h,d} \to C_{f,d}, \quad (x,y) \mapsto (c_0^j x, c_0^{-i} y),$$
 which is $\mu_d$-equivariant and gives rise to the $\mu_d$-equivariant biregular isomorphism of smooth projective $K_0$-curves
 $$\Psi: \CC_{h,d} \to \CC_{f,d}, \quad \Psi(O_{f,d})=O_{h,d}.$$
 This implies that  $P\in \CC_{h,d}(K_0)$ is a torsion point of order $m$ if and only if $\Psi(P) \in \CC_{f,d}(K_0)$ is a torsion point of order $m$.
 On the other hand, if $Q$ is a $K$-point of $\CC_{h,d}$, then $y_1(Q)=0$ if and only if $y(\Psi(Q))=0$.

 This implies that an integer $m>1$ is $(n,d)$-reachable over $K_0$ if and only if there exists a degree $n$ not necessarily {\sl monic} polynomial $f(x) \in K_0[x]$ without repeated roots such that
 the corresponding $\CC_{h,d}(K_0)$ has a point of order $m$. In particular,
 the assertions of Theorem \ref{thm0} remain true for arbitrary not necessarily monic polynomials $f(x)$.
 \end{Rem}

We will need the following three elementary but useful assertions from \cite[Sect. 2, p. 4-5]{BZR}.

\begin{lem}\label{l1}
Let $\vf$ be a rational function on $\mathcal C_{f,d}$ defined by a polynomial $x-a$, where $a\in K$.
Let $P$ be an arbitrary $K$-point on $\CC_{f,d}$ with abscissa $a$,
i.e., $P$ is a zero of $\vf$ .
Then $$\div(x-a)
= \left( \sum_{\gamma \in \mu_d}(\gamma P)\right)-d(O).$$
\end{lem}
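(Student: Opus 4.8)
The plan is to compute the divisor of the rational function $\vf = x - a$ on the smooth projective model $\mathcal C_{f,d}$ by analyzing its zeros and poles separately, using the degree-$d$ cyclic cover structure $\mathcal C_{f,d} \to \mathbb P^1$ and the coprimality $(n,d)=1$.

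First I would handle the zeros. On the affine curve $C_{f,d}$, the locus $x = a$ consists of all points $(a, c)$ with $c^d = f(a)$. There are two cases. If $f(a) \neq 0$, then $f(a)$ has exactly $d$ distinct $d$th roots in the algebraically closed field $K$ (here we use $\fchar(K) \nmid d$), and if $P = (a,c)$ is one of them, the full fiber is $\{\gamma P : \gamma \in \mu_d\}$, consisting of $d$ distinct points. One checks that $x - a$ is a uniformizer at each such point — because the cover $\mathcal C_{f,d} \to \mathbb P^1$ is unramified over $x = a$, so $x - a$ pulls back to a local parameter — hence each $\gamma P$ is a simple zero, giving the zero divisor $\sum_{\gamma \in \mu_d}(\gamma P)$. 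If instead $f(a) = 0$, then $x = a$ meets the curve in the single point $P = (a,0)$, which is a ramification point of the cover; there $x - a$ vanishes to order $d$ (the local equation being $y^d = (\text{unit}) \cdot (x-a)$), and $\mu_d$ fixes $P$, so again the zero divisor equals $\sum_{\gamma \in \mu_d}(\gamma P) = d(P)$. In both cases the zero divisor is exactly $\sum_{\gamma \in \mu_d}(\gamma P)$, where $P$ is any $K$-point with abscissa $a$.

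Next, the poles: the only pole of $x$ on $\mathcal C_{f,d}$ is the unique infinite point $O$ (since $\Phi^{-1}(\infty) = \{O\}$ as $(n,d)=1$), so the pole divisor of $x - a$ is supported at $O$ alone. Its multiplicity there must equal $\deg(\text{zero divisor}) = d$ because the divisor of a rational function has degree zero; alternatively, the pole order of $x$ at $O$ is $d$, directly from the description of the local behavior at infinity (the function $x$ has a pole of order $d$ and $y$ a pole of order $n$ at $O$, which is consistent with $y^d = f(x)$ having degree $dn$ on each side). Either way we get the pole divisor $d(O)$. Combining, $\div(x-a) = \left(\sum_{\gamma \in \mu_d}(\gamma P)\right) - d(O)$, which is the claim.

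The main obstacle, such as it is, is the local analysis at the ramification point $P = (a,0)$ and at infinity $O$: one must correctly identify the ramification index of the cover $\mathcal C_{f,d} \to \mathbb P^1$ at these points and translate it into vanishing/pole orders of $x - a$. Once that local computation is in hand, everything else is the standard fact that principal divisors have degree zero together with the clean description of the fibers of $x$ afforded by $(n,d)=1$. Since the paper cites this as a known elementary assertion from \cite{BZR}, the expected proof is short, essentially the two bullet-point cases above plus the degree-zero bookkeeping.
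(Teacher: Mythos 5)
Your proof is correct and is exactly the standard argument this lemma rests on (the paper itself only cites it from \cite{BZR} as elementary): the fiber of $x=a$ is either $d$ unramified points permuted simply transitively by $\mu_d$, or the single ramification point $(a,0)$ where $y$ is a uniformizer and $x-a=y^d/u(x)$ with $u(a)\ne 0$ forces vanishing order $d$, while the pole part $d(O)$ follows either from the degree-zero property of principal divisors or from the pole order $d$ of $x$ at the unique infinite point. Both your local analyses are sound, so there is nothing to add.
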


\begin{cor}\label{corlemma}
Let $\vf$ be a rational function on $\mathcal C_{f,d}$ defined by a polynomial \break $\vf =\prod_{i=1}^k(x-a_i)^{c_i},$  where
$c_i\in\mathbb Z$ and
$a_i\in K$
($a_i$ are not necessarily distinct). Then
\beq \div(\vf)=\sum\limits_{i=1}^k c_i\left(
 \left(\sum_{\gamma \in \mu_d}(\gamma P_i)\right)-d(O)\right).\eeq
If $D=(P_1)+\cdots  +(P_{r})$ is a sum of $r$ points, then
\beq\label{orbitdiv}
\sum_{\gamma \in \mu_d} \gamma D=
\sum_{i=1}^{r}\sum_{\gamma \in \mu_d}\gamma P_i.
\eeq
\end{cor}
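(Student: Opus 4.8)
The plan is to deduce both displayed identities from Lemma~\ref{l1} combined with the elementary functoriality of the divisor map and the $\mathbb{Z}$-linearity of the $\mu_d$-action on divisors; no genuinely new geometric input is needed.

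For the first identity I would start from the fact that $\vf\mapsto\div(\vf)$ is a homomorphism from the multiplicative group $K(\mathcal C_{f,d})^{\times}$ to the additive group of divisors on $\mathcal C_{f,d}$, so that $\div(\vf_1\vf_2)=\div(\vf_1)+\div(\vf_2)$ and $\div(\vf^{-1})=-\div(\vf)$. Applied to $\vf=\prod_{i=1}^{k}(x-a_i)^{c_i}$ this gives $\div(\vf)=\sum_{i=1}^{k}c_i\,\div(x-a_i)$. Since $K$ is algebraically closed, for each $i$ there is at least one $K$-point $P_i$ of $\mathcal C_{f,d}$ with abscissa $a_i$, and Lemma~\ref{l1} says that $\div(x-a_i)=\bigl(\sum_{\gamma\in\mu_d}(\gamma P_i)\bigr)-d(O)$ for any such choice of $P_i$. (The right-hand side depends only on the $\mu_d$-orbit of $P_i$, so the choice is immaterial; when $a_i$ is a root of $f$ this orbit sum degenerates to $d\,(P_i)$, a case already recorded correctly by Lemma~\ref{l1}.) Substituting yields exactly $\div(\vf)=\sum_{i=1}^{k}c_i\bigl(\bigl(\sum_{\gamma\in\mu_d}(\gamma P_i)\bigr)-d(O)\bigr)$, the asserted formula (the subscript in the statement should read $c_i$, not $c_k$).

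For the second identity I would simply unwind definitions: each $\gamma\in\mu_d$ acts on the group of divisors $\mathbb{Z}$-linearly by $\gamma\bigl(\sum_Q n_Q(Q)\bigr)=\sum_Q n_Q(\gamma Q)$, so for $D=(P_1)+\cdots+(P_r)$ we have $\gamma D=\sum_{i=1}^{r}(\gamma P_i)$. Summing over the $d$ elements of $\mu_d$ and interchanging the two finite sums gives $\sum_{\gamma\in\mu_d}\gamma D=\sum_{i=1}^{r}\sum_{\gamma\in\mu_d}(\gamma P_i)$, which is \eqref{orbitdiv} (with the evident typo $r_1\to r$). The main ``obstacle'' is purely bookkeeping: there is no analytic or geometric content beyond Lemma~\ref{l1}, and the only points worth a sentence are the independence of the orbit sums from the chosen representatives $P_i$ and the degenerate behaviour of those orbits at the ramification points, both of which are already handled by Lemma~\ref{l1}.
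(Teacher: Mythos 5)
Your proof is correct and follows the standard route the paper intends (the paper states this corollary without proof, importing it as an elementary fact from \cite{BZR}): additivity of $\div$ on products reduces the first identity to Lemma~\ref{l1}, and the second is just $\mathbb{Z}$-linearity of the $\mu_d$-action on divisors. Your remarks on the typos ($c_k\to c_i$, $r_1\to r$) and on the degenerate orbits at ramification points are accurate and need no further elaboration.
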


\begin{lem}\label{l2}
Let $D$ be an arbitrary divisor of degree 0 on $\mathcal C_{f,d}$. Then the divisor $\sum_{\gamma \in \mu_d}\gamma D$ is principal.
\end{lem}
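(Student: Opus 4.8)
The plan is to exhibit $\sum_{\gamma\in\mu_d}\gamma D$ explicitly as the divisor of a product of linear polynomials in $x$. Conceptually, this sum is $\mu_d$-invariant and hence pulled back from the quotient $\mathcal C_{f,d}/\mu_d\cong\mathbb P^1$, on which every degree-zero divisor is principal ($\Pic^0(\mathbb P^1)=0$); the point of the proof is just to make this descent explicit using the divisor formulas already at hand.

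First I would write $D=\sum_{i=1}^r n_i(P_i)$ with $n_i\in\mathbb Z$ and $\deg D=\sum_{i=1}^r n_i=0$. Split the index set according to whether $P_i=O$ or $P_i\ne O$; for the latter points, let $a_i:=x(P_i)\in K$ denote the abscissa, which is well defined because $\mathcal C_{f,d}\setminus\{O\}$ is identified with the affine curve $C_{f,d}$. For each $i$ with $P_i\ne O$, Lemma \ref{l1} gives
$$\sum_{\gamma\in\mu_d}(\gamma P_i)=\div(x-a_i)+d(O),$$
and this includes the Weierstrass (ramification) points automatically: there the $\mu_d$-orbit of $P_i$ is the single point $P_i$ and both sides equal $d(P_i)$. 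When $P_i=O$, the orbit sum is simply $d(O)$, since $O$ is a fixed point of the $\mu_d$-action.

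Multiplying the $i$-th identity by $n_i$, summing over all $i$, and using Corollary \ref{corlemma} to recognize $\sum_{P_i\ne O}n_i\,\div(x-a_i)$ as $\div\big(\prod_{P_i\ne O}(x-a_i)^{n_i}\big)$, I obtain
$$\sum_{\gamma\in\mu_d}\gamma D=\div\!\Big(\prod_{P_i\ne O}(x-a_i)^{n_i}\Big)+d\Big(\sum_{i=1}^r n_i\Big)(O).$$
Because $\deg D=0$, the residual multiple of $(O)$ vanishes, so $\sum_{\gamma\in\mu_d}\gamma D$ equals $\div\big(\prod_{P_i\ne O}(x-a_i)^{n_i}\big)$ and is therefore principal. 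There is no genuine obstacle in this argument; the only things to watch are the uniform treatment of the ramification points (handled by Lemma \ref{l1}), keeping the contribution of $O$ separate from that of the affine points, and noticing that the hypothesis $\deg D=0$ is exactly what is needed to cancel the leftover $(O)$-term.
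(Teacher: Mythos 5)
Your proof is correct and is the standard argument: decompose $D$ into points, apply Lemma~\ref{l1} to each affine point (with the fixed point $O$ contributing $d(O)$), and use $\deg D=0$ to cancel the leftover multiple of $(O)$. The paper itself only cites \cite{BZR} for this lemma rather than reproving it, but your route is exactly the intended one, and your side remarks (the ramification points being handled uniformly by Lemma~\ref{l1}, and the conceptual descent to $\mathbb{P}^1$) are accurate.
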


\section{Rational functions with  only pole at $O$}
\label{functions}

Let $\vf$ be a nonzero rational function on $\CC_{f,d}$ whose  only pole is $O$ and its multiplicity is  $M$.
 Then $\vf$ is a polynomial in $x,y$. Since $y^d=f(x)$, where $f(x)$ is a polynomial of degree $n$,  $\vf$ can be represented by a polynomial of the form

\beq \label{1} \vf=s_1(x)y^{d-1}+s_2(x)y^{d-2}+\cdots+ s_{d-1}(x)y+v(x), \,\text {where}\eeq
$$s_1(x),s_2(x),\ldots, s_{d-1}(x),v(x)\in K[x].$$
\vskip.3cm
Assume that at least one of the polynomials $s_i(x)$ is nonzero. Then
we can write equation \eqref{1} in the form
\beq \label{2} \vf=s_{\alpha_1}(x)y^{d-\alpha_1}+s_{\alpha_2}(x)y^{d-\alpha_2}+\cdots+s_{\alpha_k}(x)y^{d-\alpha_k}+v(x),\eeq
where
all polynomials $s_{\alpha_i}(x)$ are nonzero and all $\alpha_i$ are distinct positive less than $d$.
Since the function $x$ has at $O$ a pole of multiplicity $d$ and the function $y$ has at $O$ a pole of multiplicity $n$ the
multiplicity of the pole of  $s_{\alpha_i}(x)y^{d-\alpha_i}$ at $O$ equals \beq \label{polemult} d\deg (s_{\alpha_i}(x))+n(d-\alpha_i).\eeq
Assume that
$$d\deg (s_{\alpha_i}(x))+n(d-\alpha_i)=d\deg (s_{\alpha_j}(x))+n(d-\alpha_j),$$ where $i\neq j$.
Then
$$d(\deg (s_{\alpha_i}(x))-\deg (s_{\alpha_j}(x)))=n(\alpha_i-\alpha_j).$$
Consequently, if $i\neq j$, then  $\alpha_i\neq\alpha_j$
and   $\deg( s_{\alpha_i}(x))\neq \deg (s_{\alpha_j}(x))$.
Now the assumption $(n,d)=1$ leads to a contradiction. So all $s_{\alpha_i}(x)y^{d-\alpha_i}$ have distinct multiplicities at $O$.
If $v(x)$ is a constant, then
\beq\label{30}
M=\max\{d\deg (s_{\alpha_i})+n(d-\alpha_i)\}.
\eeq
If $v(x)$ is not a constant, then it has at $O$ a pole whose multiplicity is  $d\deg(v)$.
Since $d\deg (s_{\alpha_i}(x))+n(d-\alpha_i)$ is not divisible by  $d$,  we obtain that all  summands on the right-hand side of \eqref{2} have a pole  at $O$ with distinct multiplicities.
In  all the cases we have   that
\beq\label{3}
M=\max\{d\deg (s_{\alpha_i})+n(d-\alpha_i),d\deg (v)\}
\eeq
(which holds even if $v=0$).
Consequently, in all cases \beq\label{diopheq} M=td+kn,\eeq where $t$ is an integer and $k$ is an integer such that  $0\leq k<d$.

We have proved the following statement.

\begin{prop}\label{poleorder}
Let $\vf$ be a nonzero rational function on $\CC_{f,d}$ whose  {\sl only} pole is $O$  and its multiplicity is $M$. Assume that
$1<M<nd$ and an integer $k$ is such that $kn\leq M<(k+1)n$, where $0\leq k<d$. Then there exists a nonnegative
 integer $j\leq k$ such that $M\equiv jn\mod d$.
 \end{prop}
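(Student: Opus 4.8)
The statement of Proposition~\ref{poleorder} is essentially a repackaging of the analysis carried out in the paragraphs immediately preceding it, so the plan is to organize that analysis into a clean argument. Start from a nonzero rational function $\vf$ on $\CC_{f,d}$ whose only pole is $O$ with multiplicity $M$. Since the regular functions on $\CC_{f,d}\setminus\{O\}=C_{f,d}$ are exactly the polynomials in $x,y$ modulo $y^d=f(x)$, write $\vf$ in the normal form \eqref{1}, i.e.\ as $\sum_{\alpha=1}^{d-1}s_\alpha(x)y^{d-\alpha}+v(x)$ with $s_\alpha,v\in K[x]$, and then pass to the reduced form \eqref{2} keeping only the nonzero $s_{\alpha_i}$. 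The first key step is the pole-order computation: $x$ has a pole of order $d$ at $O$ and $y$ a pole of order $n$, so the summand $s_{\alpha_i}(x)y^{d-\alpha_i}$ has a pole of order $d\deg(s_{\alpha_i})+n(d-\alpha_i)$ at $O$, while $v(x)$ (if nonconstant) has a pole of order $d\deg(v)$.

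The second key step is the observation, using $(n,d)=1$, that the pole orders $d\deg(s_{\alpha_i})+n(d-\alpha_i)$ are pairwise distinct for distinct $i$, and moreover none of them is divisible by $d$ (since $0<\alpha_i<d$ forces $n(d-\alpha_i)\not\equiv0\bmod d$), whereas $d\deg(v)$ is divisible by $d$. Consequently all the summands in \eqref{2} have pairwise distinct pole orders at $O$, there can be no cancellation of the leading pole, and $M$ equals the maximum of these orders, as in \eqref{3} (including the degenerate cases where $v=0$ or $v$ is constant, or where no $s_\alpha$ survives). In particular $M$ is of the form $td+kn$ for some integer $t$ and some $k$ with $0\le k<d$: either $M=d\deg(v)$ (take $k=0$) or $M=d\deg(s_{\alpha_i})+n(d-\alpha_i)$ for the index $i$ achieving the maximum (take $k$ to be the residue of $d-\alpha_i$ in $\{0,\dots,d-1\}$, absorbing the correction into $t$).

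The final step is to reconcile this representation with the hypothesis $kn\le M<(k+1)n$. Reducing $M=td+k'n$ modulo $d$ gives $M\equiv k'n\bmod d$ for some $k'$ with $0\le k'<d$; I would then set $j:=k'$ and show $j\le k$. If $j>k$, then because $M\equiv jn\bmod d$ and $M<(k+1)n\le jn$, the nonnegative integer $jn-M$ is a positive multiple of $d$, so $M\le jn-d$; combined with $jn\le (d-1)n$ this still leaves room, so the cleaner route is: from $j n \equiv M \bmod d$ and $0\le M$ one gets $M = jn - qd$ or $M = jn + qd$ for some $q\ge 0$; the constraint $kn\le M<(k+1)n$ together with $0\le j<d$ pins down $j\le k$ by a short case check (the point is simply that $jn$ and $M$ lie in the same residue class mod $d$ and $M\ge kn$, while $jn< (j+1)n$, so $j$ cannot exceed $k$ without violating $M<(k+1)n$, since the gap would be at least $n>d$... one checks $jn-M$ cannot be a positive multiple of $d$ bounded by $n$ unless $j\le k$). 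I expect this last arithmetic reconciliation to be the only place requiring genuine care; everything before it is the bookkeeping already spelled out in the excerpt. The real content — that pole orders at $O$ of the monomials $s_\alpha(x)y^{d-\alpha}$ are forced to be distinct by $(n,d)=1$ — is precisely the main point, and it is already established in the text above the Proposition.
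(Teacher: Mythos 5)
Your decomposition of $\vf$, the pole-order computation, and the distinctness argument via $(n,d)=1$ are all correct and are exactly the paper's route up to equation \eqref{diopheq}. The genuine gap is in your final step. You record only that $M\equiv k'n\bmod d$ for some $k'$ with $0\le k'<d$ and then try to deduce $k'\le k$ from this congruence together with $kn\le M<(k+1)n$. That implication is false as a purely arithmetic statement: take $n=5$, $d=3$, $M=7$. Then $k=1$, and the unique $j\in\{0,1,2\}$ with $jn\equiv M\bmod d$ is $j=2>k$; here $jn-M=3$ is a positive multiple of $d$ that is smaller than $n$, so the ``short case check'' you sketch has no way to rule this out. (Of course $M=7$ is a Weierstrass gap at $O$ for this $(n,d)$, so no such $\vf$ exists — but that is precisely the geometric information your arithmetic reconciliation has discarded.)

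The missing ingredient is the nonnegativity of the coefficient of $d$ in the representation of $M$, which you explicitly throw away by ``absorbing the correction into $t$.'' From \eqref{3}, either $M=d\deg(v)$, in which case $j=0\le k$ works, or $M=d\deg(s_{\alpha_i})+n(d-\alpha_i)$ for the maximizing index $i$, in which case one should set $j:=d-\alpha_i\in\{1,\dots,d-1\}$ and use $\deg(s_{\alpha_i})\ge 0$ to get
\[
jn\le d\deg(s_{\alpha_i})+jn=M<(k+1)n,
\]
whence $j<k+1$, i.e.\ $j\le k$, immediately. No congruence juggling is needed; the inequality $M\ge jn$ is the whole point, and it comes from the fact that $s_{\alpha_i}$ is a polynomial (not a Laurent polynomial). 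With that one-line replacement your proof coincides with the paper's.
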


From Proposition \ref{poleorder} we immediately obtain the following statement (first assertion of Main Theorem).

\begin{thm}\label{order m}
Let  $Q$ be a $K$-point of order $m$ in $\CC_{f,d}$ and $1<m<nd$.
Let $k$ be an integer  such that $kn\leq m<(k+1)n$, where $0\leq k<d$. Then there exists a nonnegative
 integer $j\leq k$ such that $m\equiv jn\mod d$.
 In particular, if such an $m$ lies strictly between $n$ and $n+d$, then it is not congruent to $n$ modulo $d$ and therefore is divisible by $d$ and equals $d \cdot [(n+d)/d]$.
\end{thm}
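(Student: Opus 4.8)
The plan is to deduce the statement directly from Proposition~\ref{poleorder}; the only substantive point is the passage from the \emph{order} of a torsion point to the \emph{pole multiplicity} of an associated rational function. First I would note that since $Q$ has order $m>1$, we have $Q\ne O$, and by the definition of $\alb_O$ and of the group law on $J(\CC_{f,d})$ the degree-zero divisor $m(Q)-m(O)$ is principal: write $m(Q)-m(O)=\div(\vf)$ for some nonzero rational function $\vf$ on $\CC_{f,d}$. Because $Q\ne O$, the divisor $m(O)$ is precisely the polar part of $\div(\vf)$, so the only pole of $\vf$ is $O$ and its multiplicity there equals $m$.

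Next I would apply Proposition~\ref{poleorder} with $M=m$: the hypotheses $1<M<nd$ and $kn\le M<(k+1)n$ with $0\le k<d$ hold by assumption, so the proposition produces a nonnegative integer $j\le k$ with $m\equiv jn\bmod d$, which is the first assertion.

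For the final sentence, suppose in addition that $n<m<n+d$. Since $d<n$ we have $n+d<2n$, hence $n\le m<2n$ and $k=[m/n]=1$; thus $j\in\{0,1\}$. If $j=1$, then $m\equiv n\bmod d$, i.e.\ $m=n+td$ for some integer $t$, which is incompatible with $n<m<n+d$; therefore $j=0$ and $d\mid m$. Finally, since $(n,d)=1$ and $d>1$, neither $n$ nor $n+d$ is a multiple of $d$, so there is exactly one multiple of $d$ strictly between them, namely $d\,[(n+d)/d]$, and $m$ must equal it.

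There is no serious obstacle here: the argument is essentially a one-line reduction to Proposition~\ref{poleorder}, and the only place that needs a moment's care is identifying the pole multiplicity of $\vf$ at $O$ with $m$ — which uses that $O$ is a single smooth point of $\CC_{f,d}$ and that $Q\ne O$ — together with the elementary counting in the last paragraph.
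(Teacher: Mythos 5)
Your proposal is correct and follows exactly the paper's route: the theorem is deduced by observing that $m(Q)-m(O)=\div(\vf)$ for a function $\vf$ whose only pole is $O$ with multiplicity $m$, and then invoking Proposition~\ref{poleorder}; the paper states this reduction as "immediate" while you spell out the details (including the elementary counting showing the unique multiple of $d$ in the open interval $(n,n+d)$ is $d\cdot[(n+d)/d]$), all of which is accurate.
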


\begin{Rem}
\label{Norm}
Let $P=(a,b) \ne O$ be a $K$-point of $\CC_{f,d}$,  $m>1$ an integer, and $\phi$ a nonzero rational function on $\CC_{f,d}$ such that
$$\div(\phi)=m(P)-m(O),$$
i.e., $P$ is the   only zero of $\phi$, $O$ is the   only pole of $\phi$, and they both have multiplicity $m$. As we have already seen, $\phi$ may be represented as $H(x,y)$, where  $H\in K[X,Y]$ is a polynomial in two variables   $X,Y$ with coefficients in $K$.  It follows  that for each $\gamma \in \mu_d$ the rational function $\phi\circ \gamma$ coincides with $H(x,\gamma y)$; in particular,
$$\div (H(x,\gamma y))=m(\gamma^{-1}P)-m(O) \quad \forall \gamma \in \mu_d.$$
This implies that the product
$$\mathrm{Norm}(\phi):=\prod_{\gamma \in \mu_d}H(x,\gamma y)$$
has divisor
$$m\left(\sum_{\gamma \in \mu_d}(\gamma^{-1} P)\right)-d m (O)=
m \left( \sum_{\gamma \in \mu_d}(\gamma P)-d (O)\right),$$
which,  in light of Lemma  \ref{l1}, coincides with
$$m \cdot \div(x-a)=\div((x-a)^m).$$
It follows that there exists a nonzero $A \in K$ such that
$$\mathrm{Norm}(\phi)=A(x-a)^m.$$

\end{Rem}

\section{Torsion points}\label{W^1}

 Let $P$ be a point of order $m>1$ on $\CC_{f,d}$. We have seen (Theorem~\ref{thm0}) that then either $m=d$ or $m\geq n$.
  Moreover, if $f(x)$ is monic, then the curve  $\CC_{f,d}$ has a $K$-point of order $n$ if and only if there exist $a \in K$
 and a polynomial $v(x) \in K[x]$ such that
 $$\deg(v) \le \frac{n-1}{d}, \quad f(x)=(x-a)^n+v^d(x)$$
 (see \cite[Th. 1(iii)]{BZR}).
 In this section we consider the case where $n<m<2n$. It follows from Theorem
 \ref{order m} that if $m$ is $(n,d)$-reachable over $K$, then either $d\mid m$ or $m$ is congruent to $n$ modulo $d$.

 \begin{prop}\label{pointorder m}
 Let $m$ be an integer such that
$$n<m<2n, \quad d |m.$$
  Let $f(x)\in K[x]$ be a degree $n$  polynomial without repeated roots.

a)  Assume that
$\CC_{f,d}$  has a point $P$ of order $m$
with $x(P)=a \in K$.
Suppose that $m<n+d$, i.e.,
$$m=d \cdot [(n+d)/d].$$

Then there exist a nonzero $A\in K$ and a polynomial $v(x) \in K[x]$ of degree $m/d$ such that
$$f(x)=A(x-a)^m+v(x)^d, \quad v(a) \ne 0, \quad P=(a,  v(a)).$$

b) 
Let $f(x)\in K[x]$ be a separable degree $n$ polynomial of the form
$$f(x)=A(x-a)^m+v(x)^d,$$ where
$$a \in K, \ A\in K, \ A\neq0, \quad v(x)\in K[x], \ \ \deg(v)=m/d, \ \ v(a) \ne 0.$$ 
Then
 $P=(a, v(a))$ is   a torsion $K$-point on  $\CC_{f,d}$
of order $m$.

In addition, if $K_0$ is a subfield of $K$ such that
$$A,a \in K_0, \quad v(x) \in K_0[x],$$
then $P \in \CC_{f,d}(K_0)$.
\end{prop}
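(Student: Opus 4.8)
The plan is to prove the two directions separately. For part~(b), the construction direction, I would argue as follows. Given $f(x)=A(x-a)^m+v(x)^d$ with $A\neq 0$, $\deg v=m/d$ and $v(a)\neq 0$, put $P=(a,v(a))$. Since $f(a)=v(a)^d$, the point $P$ lies on $\CC_{f,d}$, and $P\neq O$ with $y(P)=v(a)\neq 0$. Consider the rational function $\phi=y-v(x)$, whose only pole is $O$; as $m>n$, the multiplicity of its pole at $O$ is $\max\{n,\,d\deg v\}=\max\{n,m\}=m$. If $\phi$ vanishes at an affine point $Q$, then $y(Q)^d=v(x(Q))^d$, hence $f(x(Q))=v(x(Q))^d$, hence $A(x(Q)-a)^m=0$, so $x(Q)=a$; the fibre over $x=a$ is unramified because $f(a)=v(a)^d\neq 0$, and the only point of it with ordinate $v(a)$ is $P$. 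Therefore $\div(\phi)=m(P)-m(O)$, so $m(P)-m(O)$ is principal, and since $P\neq O$, $y(P)\neq 0$ and $m<2n$, Corollary~\ref{O2n} shows that $P$ has order exactly $m$.

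Turning to part~(a): since $m>n>d$ we have $m\neq d$, so Theorem~\ref{thm0} gives $y(P)\neq 0$; also $P\neq O$, and we write $P=(a,c)$ with $c\neq 0$. As $P$ has order $m$, the divisor $m(P)-m(O)$ is principal, so there is a nonzero rational function $\phi$ with $\div(\phi)=m(P)-m(O)$ whose only pole is $O$, of multiplicity $m$. Write $\phi$ in the normal form \eqref{2}, $\phi=\sum_i s_{\alpha_i}(x)y^{d-\alpha_i}+v(x)$; as established in the lines preceding \eqref{3}, the summands have pairwise distinct pole orders at $O$ and $m$ is the largest. The $i$-th $y$-term has pole order $d\deg(s_{\alpha_i})+n(d-\alpha_i)\equiv -n\alpha_i\pmod d$, which is nonzero modulo $d$ because $(n,d)=1$ and $0<\alpha_i<d$; since $d\mid m$, the maximum must be realized by $d\deg v$, whence $\deg v=m/d$ and every $y$-term has pole order $<m<n+d$. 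From $d\deg(s_{\alpha_i})+n(d-\alpha_i)<n+d$ together with $d<n$ one obtains $\deg(s_{\alpha_i})=0$ and $d-\alpha_i=1$; moreover some $y$-term must occur, for if $\phi=v(x)$ then $\phi$ vanishes along the entire $\mu_d$-orbit of $P$, which consists of $d>1$ distinct points because $y(P)\neq 0$, contradicting that $P$ is the only zero of $\phi$. Hence $\phi=\beta y+v(x)$ with $\beta\in K^{\times}$ and $\deg v=m/d$; dividing by $\beta$ and absorbing the sign into $v$, we may assume $\phi=y-v(x)$, still with $\div(\phi)=m(P)-m(O)$ and $\deg v=m/d$.

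It remains to read off the equation for $f$ and to identify $P$. From $\phi(P)=0$ we get $c=v(a)$, so $P=(a,v(a))$ with $v(a)=c\neq 0$. By Remark~\ref{Norm}, $\mathrm{Norm}(\phi)=A_1(x-a)^m$ for a nonzero $A_1\in K$, while on the other hand, using $\prod_{\gamma\in\mu_d}(S-\gamma y)=S^d-y^d$ and $y^d=f(x)$ on $\CC_{f,d}$,
\[
\mathrm{Norm}(\phi)=\prod_{\gamma\in\mu_d}\bigl(\gamma y-v(x)\bigr)=(-1)^d\prod_{\gamma\in\mu_d}\bigl(v(x)-\gamma y\bigr)=(-1)^d\bigl(v(x)^d-f(x)\bigr).
\]
Comparing the two expressions yields $f(x)=v(x)^d+A(x-a)^m$ with $A=-(-1)^dA_1\neq 0$, which is the presentation asserted in part~(a). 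I expect the main obstacle to be the bookkeeping in the first half of part~(a): showing that exactly one $y$-term survives, that its coefficient is a nonzero constant, and that a $y$-term cannot be altogether absent — this is precisely where the hypotheses $d\mid m$ and $m<n+d$ enter. Once $\phi$ is brought to the form $y-v(x)$, the Norm identity above completes the argument routinely.
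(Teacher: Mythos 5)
Your proof is correct and follows essentially the same route as the paper's: the same normal-form analysis of a function with divisor $m(P)-m(O)$ forcing $\phi=\beta y+v(x)$ with $\deg v=m/d$, the same Norm identity from Remark~\ref{Norm} to extract $f(x)=A(x-a)^m+v(x)^d$, and the same appeal to Corollary~\ref{O2n} in part~(b). The only differences are cosmetic reorderings (you use $d\mid m$ to pin down $\deg v$ first and exclude $\phi=v(x)$ directly via $y(P)\neq 0$, while the paper first bounds $\deg s_{\alpha_i}$ and the number of $y$-terms via $m<n+d<2n$).
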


 \begin{proof}
a)  Let $P$ be a $K$-point of order $m$ on $\CC_{f,d}$.   We have   \beq\label{order}m(P)-m(\infty)=m((P)-(\infty))=\div(h), \eeq where $h$ is a rational function on $\mathcal C_{f,d}$. The point $O$ is the   only pole of $h$ and it has multiplicity $m$. Therefore $h$ is a polynomial in $x,y$.
By \eqref{1} and \eqref{2},
the rational function $h$ can either be represented by a polynomial of the form $h=v(x)$ or
\beq \label{22} h=s_{\alpha_1}(x)y^{d-\alpha_1}+s_{\alpha_2}(x)y^{d-\alpha_2}+\cdots+s_{\alpha_k}(x)y^{d-\alpha_k}+v(x), \eeq
 where all polynomials $s_{\alpha_i}(x)$ are nonzero and all $\alpha_i$ are distinct nonnegative {\sl integers} that are strictly less than $d$.

 First assume that $h=v(x)$. Then
 $$m(P)-m(\infty)=\div(v).$$
The point $P$ is a zero of $v(x)$. It follows that all  points $\gamma P$
($\gamma \in \mu_d$) also are zeros of $v(x)$. Since $P$ is the   only zero of $h=v(x)$,
we have $\gamma P=P$ for all $\gamma$, which implies that $P=(w_j,0)$ for some $j$.
By Theorem \ref{thm0}, $P$ has order $d$, i.e., $d=m$, which is wrong, since $d<n<m$.

\vskip.3cm

 Now let
\beq \label{22} h=s_{\alpha_1}(x)y^{d-\alpha_1}+s_{\alpha_2}(x)y^{d-\alpha_2}+\cdots+s_{\alpha_k}(x)y^{d-\alpha_k}+v(x), \eeq
 where all polynomials $s_{\alpha_i}(x)$ are nonzero and all $\alpha_i$ are distinct nonnegative {\sl integers} that are strictly less than $d$.
By \eqref{3},
\beq\label{33}
m=\max\{d\deg (s_{\alpha_i})+n(d-\alpha_i),d\deg (v)\}.
\eeq
It follows from the inequality $m<n+d$  that  $\deg (s_{\alpha_i})=0$ for all $i$. Indeed, if $\deg (s_{\alpha_i})\geq1$ for some $i$, then
 \eqref{33} implies that
 $$m\geq d\deg (s_{\alpha_i})+n(d-\alpha_i)\geq n+d,$$ a contradiction.
If $k\geq 2$, then    $d-\alpha_i\geq2$ for at least one $i$, which implies that $m\geq 2n$, a contradiction. Therefore $k=1$ and $h=sy+v(x)$, where $s$ is a nonzero constant. It follows from \eqref{33} that
$$m=\max\{n,d\deg (v)\}.$$
Since $m>n$ we have
$$m=d\cdot\deg(v), \quad \deg(v)=\frac{m}{d}>\frac{n}{n}=1.$$
It follows that
$$\deg(v) \ge 2.$$
       We may and will assume that $h=v(x)-y$.
From \eqref{order} we get
$$m(P)-m(O)=\div(v(x)-y).$$
Applying Remark \ref{Norm} to $\phi=v(x)-y$, we conclude that
$$\prod_{\gamma \in \mu_d}( v(x)-\gamma y)=A^{\prime}(x-a)^m,$$
where $A^{\prime}$ is a nonzero element of $K$. This implies that
$$v(x)^d-f(x)=v(x)^d-y^d=A^{\prime}(x-a)^m,$$
i.e.,
$$v(x)^d-f(x)=A^{\prime}(x-a)^m, \quad f(x)=-A^{\prime}(x-a)^m+v(x)^d.$$
Hence, 
$$f(x)=A(x-a)^m+v(x)^d,$$
where $A=-A^{\prime}\in K$, $A\neq0$ and $v(x)$ is a polynomial of degree $m/d$.  Since $f(x)$ has no repeated roots, $v(a)\neq0$ (recall that $\deg(v) \ge 2$ and $d \ge 2$).
This proves (a).

b)
Let $f(x)\in K[x]$ be a degree $n$ separable polynomial of the form
$f(x)=A(x-a)^m+v(x)^d$, where $A\in K, A\neq0$ and $v(x)$ is a polynomial of degree  $m/d$.
 Let us put
 $$c:=v(a)\in K.$$ Then $P=(a,c)$ lies on the curve $y^d=A (x-a)^m+v(x)^d$. It follows from  $y^d- v(x)^d=A(x-a)^m$ that all zeros of the rational function $y-v(x)$ have abscissa $a$. But each $K$-point on $\CC_{f,d}$ with abscissa $a$
has the form $(a,\gamma c)$, where $\gamma \in \mu_d$.
Assume that for some $\gamma \ne 1$  the point $\gamma P=(a,\gamma c)$
is also a zero of  $y-v(x)$. Since $c=v(a)\neq0$, we obtain
$$0=\gamma c-v(a)=\gamma c-c=(\gamma-1)c\neq 0,$$ which gives us a contradiction. Therefore $y-v(x)$ has only one zero, namely $P$.
Since $y-v(x)$ has the {\sl only} pole at $O$ and its multiplicity is $m$ (because $n<m$), we have
$$m(P)-m(O)=\div(y-v(x)),$$ and so $P$ has order $m_1>1$ that divides $m$.

Since $y(P)=v(a) \ne 0$ and $n<m<2n$ by assumption,  it follows from Corollary \ref{O2n} that $P$ has order $m$.

In order to finish the proof, it remains to notice that $c=v(a)\in K_0$, because $a \in K_0$ and $v(x)\in K_0[x]$.
\end{proof}

The existence of degree $n$ polynomials of the form $A (x-a)^m+v(x)^d$ without repeated roots will be discussed in the next section.

\section{Polynomials of the form $A (x-a)^m+v(x)^d$}\label{separ}
\vskip.5cm
Throughout this section, $m>n$ is an integer and $\ell:=m/d$. 


\begin{thm}\label{exist order m}

\begin{itemize}
 \item[(a)]
 If $m= d \cdot [(n+d)/d]$ is
  $(n,d)$-reachable over $K$ then
 $$n-m+m/d=n-(d-1)  [(n+d)/d] \ge 0.$$

 \item[(b)]
 Suppose that $d |m$, i.e., $\ell=m/d$ is an integer,
  $$n-m+\ell  \ge 0,$$
 $K_0$ is an infinite subfield of $K$,
 and
 $\fchar(K)=\fchar(K_0)$ enjoys one of the following properties.

 \begin{itemize}
  \item[(i)]
  $\fchar(K)=0$;
  \item[(ii)]
  if $n-m +\ell=0$, then $\fchar(K)$ does not divide
  $\ell$;
  \item[(iii)]
  if $n-m +\ell>0$, then $\fchar(K)$ does not divide
  $n-m +\ell$.
 \end{itemize}

 Then $m$ is $(n,d)$-reachable over $K_0$.

\end{itemize}
\end{thm}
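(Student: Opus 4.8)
The plan is to feed the hypothesis into Proposition~\ref{pointorder m}(a) and then read off a degree inequality. Suppose $m = d\,[(n+d)/d]$ is $(n,d)$-admissible and fix $f$ with a point of order $m$ on $\CC_{f,d}$. Since $n < m < n+d < 2n$, Proposition~\ref{pointorder m}(a) gives $f(x) = A(x-a)^m + v(x)^d$ with $A\ne 0$ and $\deg v = \ell := m/d$; write $b\ne 0$ for the leading coefficient of $v$. As $\deg f = n < m = d\ell = \deg\bigl(A(x-a)^m\bigr) = \deg(v^d)$, the $x^m$-coefficient of $f$ must vanish, so $A = -b^d$ and hence $f(x) = v(x)^d - \bigl(b(x-a)^\ell\bigr)^d = \bigl(v(x) - b(x-a)^\ell\bigr)\,\Sigma(x)$ with $\Sigma(x) = \sum_{i=0}^{d-1} v(x)^{d-1-i}\bigl(b(x-a)^\ell\bigr)^{i}$. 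Each of the $d$ summands of $\Sigma$ has degree $\ell(d-1)$ and leading coefficient $b^{d-1}$, so (using $\fchar(K)\nmid d$) $\deg\Sigma = \ell(d-1) = m-\ell$; and $v(x) - b(x-a)^\ell\ne 0$ (else $f = 0$), of degree $\le\ell-1$. Hence $n = \deg f = \deg\bigl(v - b(x-a)^\ell\bigr) + (m-\ell)\ge m-\ell$, i.e.\ $n-m+\ell\ge 0$, which is $n-(d-1)[(n+d)/d]\ge 0$ as claimed.

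\textbf{Part (b).} I would first note that $n-m+\ell\ge 0$ together with $m>n$, $\ell=m/d$ forces $m\le dn/(d-1)\le 2n$, with $m=2n$ only if $d=2$. Write $r:=n-m+\ell\ge 0$. The main goal is to build a separable degree-$n$ polynomial $f(x) = v(x)^d - x^m$ (i.e.\ of the form $A(x-a)^m+v(x)^d$ with $A=-1,\ a=0$) with $v$ monic of degree $\ell$ and $v(0)\ne 0$; then, whenever $n<m<2n$ — which is all cases except $(d,m)=(2,2n)$ — Proposition~\ref{pointorder m}(b) shows $P=(0,v(0))$ has order exactly $m$, so $m$ is $(n,d)$-admissible. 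Writing $v=x^\ell+u$, the factorization from part (a) gives $f=u(x)\Sigma(x)$ with $\Sigma$ of degree $m-\ell$ and leading coefficient $d\ne 0$, so $\deg f = \deg u+(m-\ell)$, which is $n$ exactly when $\deg u=r$. For $r=0$ I would take $u\equiv 1$, so $f = (x^\ell+1)^d-x^{\ell d} = Q(x^\ell)$ with $Q(t)=(t+1)^d-t^d$ separable of degree $d-1$ with $d-1$ nonzero distinct roots $1/(\zeta-1)$; thus $f$ is separable of degree $n$ once $\fchar(K)\nmid\ell$ (this is hypothesis (ii)), and $v(0)=1$. For $r\ge 1$ I would take $u(x)=c(x^r-1)$ with $c\ne 0$, separable of degree $r$ since $\fchar(K)\nmid r$ (hypothesis (iii)), with $v(0)=-c$; a repeated root $x_0$ of $f$ satisfies $v(x_0)^d=x_0^m$ and (differentiating, using $\fchar(K)\nmid d$) $x_0v'(x_0)=\ell v(x_0)$, forcing $x_0\ne 0$, $v(x_0)=\zeta x_0^\ell$ with $\zeta^d=1$, and $x_0u'(x_0)=\ell u(x_0)$ with $u(x_0)=(\zeta-1)x_0^\ell$ — for $\zeta=1$ this makes $x_0$ a multiple root of the separable $u$ (impossible), and for $\zeta\ne 1$ it pins $x_0$ to a root of the $c$-independent polynomial $xu'(x)-\ell u(x)$ and $c$ to one of finitely many values, so a generic $c$ makes $f$ separable.

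\textbf{The boundary case $(d,m)=(2,2n)$.} This I expect to be the real obstacle: here $\ell=n$, $r=0$, and the construction above yields only Catalan-type curves $y^2=b(x-a)^n+c$, whose natural point has order $n$, not $2n$, so Proposition~\ref{pointorder m} is no longer the right tool. Instead I would aim to produce a point $P$ with $n(P)-n(O)\sim (W)-(O)$ for a Weierstrass point $W\ne O$; since $[(W)-(O)]$ has order $2$ and $\gcd(n,2)=1$, the class of $(P)-(O)$ then has order $\mathrm{lcm}(n,2)=2n$ (provided it is not killed by a proper divisor of $2n$). Concretely, take $\tilde g(x)$ of degree $(n-1)/2$ with $\tilde g(0)\ne 0$, set $\lambda=(\text{leading coeff of }\tilde g)^2$, pick $w_1\ne 0$, and put
$$f(x)=(x-w_1)\bigl[(x-w_1)\tilde g(x)^2-\lambda\, x^n\bigr].$$
Then $\deg f=n$, $f(w_1)=0$, $f(0)=w_1^2\tilde g(0)^2\ne 0$, and for generic $\tilde g,w_1$ the polynomial $f$ is separable; with $g_1:=(x-w_1)\tilde g$ one has $g_1(x)^2-f(x)=\lambda x^n(x-w_1)$, so $y+g_1(x)$ has divisor $n(P)+(W_1)-(n+1)(O)$ (with $W_1=(w_1,0)$, $P=(0,w_1\tilde g(0))$), and dividing by $x-w_1$ (whose divisor is $2(W_1)-2(O)$ by Lemma~\ref{l1}) gives $\div\bigl(y/(x-w_1)+\tilde g(x)\bigr)=n(P)-(W_1)-(n-1)(O)$, i.e.\ $n(P)-n(O)\sim(W_1)-(O)$. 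Thus $\mathrm{ord}\bigl((P)-(O)\bigr)\mid 2n$ but $\nmid n$; the remaining point — where the genericity is really used — is to rule out that $e(P)-e(O)\sim(W_1)-(O)$ for some $e\mid n$, $e<n$, which is a proper closed condition on $(\tilde g,w_1)$, so for generic parameters the order is exactly $2n$ and $2n$ is $(n,2)$-admissible.
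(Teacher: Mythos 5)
Your parts (a) and the generic part of (b) follow the paper's own route: part (a) is Proposition~\ref{pointorder m}(a) followed by the degree count of Proposition~\ref{deg n}(a) (you inline the factorization $v^d-(b(x-a)^\ell)^d$ rather than factoring over $\mu_d$, which is the same computation), and for $n<m<2n$ your construction $f=v^d-x^m$ with $v=x^\ell+u$, $\deg u=n-m+\ell$, plus a ``generic $c$'' separability argument, is exactly the paper's Propositions~\ref{deg n}(b) and~\ref{sep} with the choice $q_C=Dx^{n-m+\ell}+C$ replaced by $u=c(x^r-1)$; your verification that a repeated root pins down $c$ to finitely many values is correct and uses the characteristic hypotheses (i)--(iii) in the same places.

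The genuine divergence is your separate treatment of $(d,m)=(2,2n)$, and here you have in fact located a real gap in the paper's argument: the paper deduces (b) from Proposition~\ref{pointorder m}(b), whose proof rests on Corollary~\ref{O2n} and therefore needs $m<2n$, while the hypothesis $n-m+\ell\ge 0$ permits $m=2n$ exactly when $d=2$ (and this case is actually used in Corollary~\ref{hyperelliptic}). Worse, for $m=2n$ the construction itself fails, as you suspected: $r=0$ forces $v=Bx^{n}+C$ up to translation, so $f=2BCx^{n}+C^{2}$ and the point $(0,C)$ satisfies $\div(y-C)=n(P)-n(O)$, i.e.\ it has order $n$, not $2n$. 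Your replacement construction --- forcing $n(P)-n(O)\sim (W_1)-(O)$ for a finite Weierstrass point $W_1$ --- is sound, and the divisor computation $\div\bigl((y+g_1)/(x-w_1)\bigr)=n(P)-(W_1)-(n-1)(O)$ checks out. Two remarks on finishing it: the last step needs no genericity at all, since the order of $P$ is $2k$ with $k\mid n$ and $k$ odd, and Theorem~\ref{thm0} (order $=2$ is excluded by $y(P)\ne 0$) forces $2k\ge n$, hence $k=n$ because every proper divisor of the odd number $n$ is at most $n/3$; on the other hand, the assertion that $f=(x-w_1)\bigl[(x-w_1)\tilde g^2-\lambda x^n\bigr]$ is separable (and of exact degree $n$) for generic $(\tilde g,w_1)$ is the one point you leave unproven and should be argued, e.g.\ by exhibiting a single separable specialization. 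With that supplied, your proof is complete and covers a case the paper's does not.
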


\begin{Rem}
\begin{enumerate}
\item
Suppose that $d|m$. Then the integer
$$\ell=\frac{m}{d}>\frac{n}{d}>1,$$
hence, $\ell \ge 2$.  On the other hand,
 $$n-m +\ell=\ell-(m-n)<\ell,$$
 i.e.,
 $$n-m+\ell<\ell.$$
\item
Assume additionally that $n-m +\ell\ge 0$, i.e.,
$$0 \le n-m +\ell=n-m+m/d=n-(1-1/d)m,$$
i.e.,
$$m\le \frac{d}{d-1} n \le 2n.$$
It follows that
$$\ell =\frac{m}{d} \le \frac{2n}{2}=n.$$
We get
\begin{equation}
\label{lnm}
0 \le n-m+\ell<\ell \le n.
\end{equation}
It follows from Theorem  \ref{exist order m}(b) that $m$ is $(n,d)$-reachable over $K$
if either $\fchar(K)=0$ or $\fchar(K)>n$. This means that the third assertion of Main Theorem
follows from Theorem \ref{exist order m}(b).

\end{enumerate}

\end{Rem}

The proof of Theorem \ref{exist order m} is an almost immediate corollary
of the following two assertions.

\begin{prop}\label{deg n}
Let $m>n$ be an integer.
 Let $a, A$ be elements of $K$ and $A \ne 0$. Choose $B \in K$ such that $A=-B^d$.

a) Suppose that $v(x) \in K[x]$ is a polynomial such that the polynomial  $h(x)=A(x-a)^m+v(x)^d$ has degree $n$.

Then $d$ divides $m$, $\deg(v)=m/d=\ell$,   and
 the leading coefficient $B$ of $v(x)$ satisfies $B^d=-A$. In addition,  $v(x)=B x^{\ell}+q(x)$, where $q(x)\in K[x]$ is a nonzero polynomial  whose degree $\deg(q)$ is
$n-m+\ell$. In particular,
$$0 \le \deg(q)= n-m+\ell=n-m+m/d,$$
i.e.,
$$n \ge  m (1-1/d).$$

b) Assume that $d$ divides $m$ and $n-m+\ell\geq0$
where $\ell=m/d$. Let $q(x)\in K[x]$ be an arbitrary nonzero polynomial of degree
$n-m+\ell$. Let us choose $B \in K$ such that $A=-B^d$ and put
 $v(x):=Bx^{\ell}+q(x)$. Then
 $$\deg(v)=\ell \ge 2$$
 and the polynomial
$h(x)=Ax^m+v(x)^d$
has degree $n$.
\end{prop}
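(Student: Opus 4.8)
The plan is to prove both parts at once through the factorization $y^{d}-z^{d}=\prod_{\zeta\in\mu_{d}}(y-\zeta z)$, which is valid over $K$ precisely because $\fchar(K)\nmid d$, so that $\mu_{d}$ has $d$ distinct elements (see \eqref{restr1}). Since translating $x$ by $a$ alters neither $\deg h$ nor $\deg v$, I will work with $h(x)=Ax^{m}+v(x)^{d}$ throughout; this is the form actually used downstream (and the form stated in part (b)), and for $a=0$ the decomposition $v(x)=Bx^{\ell}+q(x)$ appearing in the conclusion is the natural one.

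For part (a), I would first run a bare degree count. Since $n=\deg h<m=\deg(Ax^{m})$, the leading terms must cancel, which forces $\deg(v(x)^{d})=m$: otherwise $\deg h=\max\{m,\,d\deg v\}>n$. Hence $d\mid m$, $\deg v=m/d=\ell$, and $v\neq 0$; comparing the coefficients of $x^{m}$, the leading coefficient of $v$ is a $d$-th root of $-A$, which I relabel $B$ (so $B\neq 0$, $B^{d}=-A$). Put $q(x):=v(x)-Bx^{\ell}$, so $\deg q\le\ell-1$. The key step is
\[
h(x)=v(x)^{d}-(Bx^{\ell})^{d}=\prod_{\zeta\in\mu_{d}}\bigl(v(x)-\zeta Bx^{\ell}\bigr).
\]
For $\zeta\neq 1$ the factor $v(x)-\zeta Bx^{\ell}=(1-\zeta)Bx^{\ell}+q(x)$ has degree exactly $\ell$ because $(1-\zeta)B\neq 0$, while the factor at $\zeta=1$ is $q(x)$. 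Therefore $\deg h=\deg q+(d-1)\ell$ (with $\deg 0:=-\infty$). Since $\deg h=n$ is finite, $q\neq 0$ and $\deg q=n-(d-1)\ell=n-m+\ell$; in particular $n-m+\ell\ge 0$. This gives part (a).

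Part (b) is the same computation run in reverse. Given $q$ of degree $n-m+\ell$, and noting that $n<m$ forces $n-m+\ell<\ell$, the polynomial $v(x)=Bx^{\ell}+q(x)$ has degree exactly $\ell$; moreover $\ell=m/d>n/d>1$ (using $d<n<m$), so $\ell\ge 2$. With $A=-B^{d}$ and $m=d\ell$ in hand, the same factorization yields $h(x)=Ax^{m}+v(x)^{d}=\prod_{\zeta\in\mu_{d}}(v(x)-\zeta Bx^{\ell})$, which is $q(x)$ times $d-1$ factors of degree $\ell$; hence $\deg h=(n-m+\ell)+(d-1)\ell=n$, as claimed.

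I do not expect a substantive obstacle here: once the factorization is spotted, the rest is bookkeeping. The only points demanding care are that $\mu_{d}$ genuinely has $d$ distinct elements — this is exactly $\fchar(K)\nmid d$, and it is what makes each $\zeta\neq 1$ factor nonzero of degree $\ell$ — and the harmless normalization $a=0$. In essence the proposition just records that the degree condition on $h$ is equivalent to $v$ agreeing with $Bx^{\ell}$ in its top $m-n$ coefficients.
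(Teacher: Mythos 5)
Your proposal is correct and follows essentially the same route as the paper: the same degree count forcing $d\mid m$ and $B^{d}=-A$, followed by the same factorization $h(x)=v(x)^{d}-(Bx^{\ell})^{d}=\prod_{\zeta\in\mu_{d}}(v(x)-\zeta Bx^{\ell})$, whose $\zeta=1$ factor is $q(x)$ and whose remaining $d-1$ factors have degree exactly $\ell$. The paper writes this as $-h(x)=(Bx^{\ell})^{d}-v(x)^{d}$, but the bookkeeping is identical, so there is nothing substantive to add.
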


\begin{prop}\label{sep}
Suppose that
 $d$ divides $m$ and  $n-m+\ell\geq0$. Assume also that $\fchar(K)$  enjoys one of the following properties.

 \begin{itemize}
  \item
  $\fchar(K)=0$;
  \item
  if $n-m +\ell=0$, then $\fchar(K)$ does not divide
  $\ell$;
  \item
  if $n-m +\ell>0$, then $\fchar(K)$ does not divide
  $n-m +\ell$.
 \end{itemize}

 Let $A$ and $D$ be  nonzero elements of $K$. Choose $B \in K$ such that
 $B^d=-A$.

 For each  nonzero $C \in K$  with $C \ne -D$ let us consider the  polynomials
 $$q_C(x)=Dx^{n-m+\ell}+C, \quad v_C(x)=B x^{\ell}+q_C(x)=B x^{\ell}+Dx^{n-m+\ell}+C.$$
 Then:
 \begin{itemize}
  \item[(i)] The polynomial
$$f_C(x):=A x^m+v_C(x)^d=Ax^m+\left(Bx^{\ell}+q_C(x)\right)^d \in K[x]$$
has  degree $n$.
\begin{itemize}
\item[(a)]
 If $n-m+\ell>0$ then the leading coefficient of  $f_C(x)$ is $dB^{d-1}D$;
in particular, $f_C(x)$ is monic if and only if $D=1/(dB^{d-1})$.
\item[(b)]
If  $n-m+\ell=0$ then the leading coefficient of $f_C(x)$ is $dB^{d-1}(D+C)$;
in particular, $f_C(x)$ is monic if and only if $D+C=1/(dB^{d-1})$.
\end{itemize}
\item[(ii)]
For all but finitely many $C\in K$ the polynomial $f_C(x)$ has no repeated roots.
\item[(iii)]
Let $K_0$ be an infinite subfield of $K$. Let us put
$$A:=-1, \ B:=1, \ D:=1  \in K_0\subset K.$$
 Then
there exist infinitely many $C \in K_0$ such that
$C \ne 0,-1$ and the corresponding polynomial
\begin{equation}
\label{fCk0}
f_C(x)=-x^m+ \left(x^{\ell}+x^{n-m+\ell}+C\right)^d \in K_0[x]\subset K[x]
\end{equation}
has degree $n$ and has no repeated roots. Its leading coefficient is $d$ if $n-m+\ell>0$
and $d(1+C)$ if $n-m+\ell=0$.
Last but not least:
$P=(0,C)$ is a torsion $K_0$-point on $\CC_{f_C,d}$ of order $m$.
 \end{itemize}
\end{prop}

\begin{proof}[Proof of Theorem \ref{exist order m}]
First assume that $m= d \cdot [(n+d)/d]$.
Suppose that $\CC_{f,d}(K)$  has a point  of order $m$ for some $f(x)$. Then it follows from Proposition \ref{pointorder m}(a) that
$d$ divides $m$ and $f(x)$ can be represented in the form
$$f(x)=A(x-a)^m+v(x)^d,$$ where $A\in K, A\neq0$, and $v(x)$ is a polynomial of degree  $\ell$.
It follows from Proposition \ref{deg n}(a)  that $n-m+\ell\geq0$. This proves Theorem \ref{exist order m}(a)
and the second assertion of Main Theorem.

Now assume that $m>n$ is an integer that is divisible by $d$,
 $$n-m+\ell\geq0,$$
   and $K_0$ and $\fchar(K)$ is as in Theorem \ref{exist order m}(b). It follows from
   Proposition \ref{sep}(iii) that $m$ is $(n,d)$-reachable over $K_0$.

\end{proof}
\begin{proof}[Proof of Proposition \ref{deg n}]
Let $h(x)$ be a polynomial of degree  $n$ and $a\in K$.
Assume that $h(x)$ can be represented in the form
$$h(x)=A(x-a)^m+v(x)^d,$$ where $v(x)\in K[x]$ is a polynomial.
Replacing $h(x)$ and $v(x)$ by $h(x+a)$ and $v(x+a)$ respectively, we may and will assume that $a=0$, i.e.,
$$h(x)=A x^m +v(x)^d.$$
Since $$\deg(h)=n<m=\deg(Ax^m),$$ the degrees of $Ax^m$ and $v(x)^d$ do coincide, and the leading coefficient of $v(x)^d$ is $-A$. This means that
$$d|m, \quad \ell =\deg(v),$$
and the leading coefficient $B$ of $v(x)$ satisfies $B^d=-A$. Let us write $v(x)$ in the form
$$v(x)=Bx^{\ell}+q(x),$$
where $\deg(q)<\ell$ and $B\neq0$. We have

$$-h(x)=-(A x^m +v(x)^d)=-A x^m-v(x)^d=\left(B x^{\ell}\right)^d-v(x)^d=$$
$$\left(B x^{\ell}-v(x)\right) \prod_{\gamma \in \mu_d, \ \gamma \ne 1}\left(B x^{\ell}-\gamma v(x)\right)= - q(x) \prod_{\gamma \in \mu_d, \ \gamma \ne 1}\left(B x^{\ell}-\gamma v(x)\right),$$
which means that
$$h(x)=q(x) \prod_{\gamma \in \mu_d, \ \gamma \ne 1}\left(B x^{\ell}-\gamma v(x)\right).$$
The degree of the first factor in the RHS equals $\deg(q)$ while all the other $(d-1)$ factors have degree $\ell$ and their coefficient  at $x^{\ell}$
is
$$B -\gamma B=B(1-\gamma)\neq 0.$$
 Since the degree of the LHS is $n$, we get the equality
$$n=\deg(q)+(d-1)\ell=\deg(q)+d\ell-\ell=\deg(q)+m-\ell,$$
i.e.,
$$\deg(q)=n-m+\ell,$$
which ends the proof of assertion (a).

Now assume that $d|m$ (i.e., $\ell$ is an integer) and $n-m+\ell\geq0$. Choose $B\in K$  such that $B^d=-A$, take an arbitrary nonzero
polynomial $q(x)\in K[x]$ of degree $n-m+\ell$ and consider the polynomial $v(x):=Bx^{\ell}+q(x)$.
Then $\deg(v)=\ell$, because
$$\deg(B x^{\ell})-\deg(q)=\ell -(n-m+\ell)=m-n>0.$$
On the other hand,
$$\ell=\frac{m}{d}>\frac{n}{n}=1$$
and therefore the integer $\ell>1$, i.e., $\ell \ge 2$.

Let us prove that  the polynomial $$h(x)= Ax^m+v(x)^d$$ has degree $n$.
We have (as above)
$$-h(x)=-(A x^m +v(x)^d)=-A x^m-v(x)^d=\left(B x^{\ell}\right)^d-v(x)^d=$$
$$\left(B x^{\ell}-v(x)\right) \prod_{\gamma \in \mu_d, \ \gamma \ne 1}\left(B x^{\ell}-\gamma v(x)\right)= - q(x) \prod_{\gamma \in \mu_d, \ \gamma \ne 1}\left(B x^{\ell}-\gamma v(x)\right),$$
which means that
\begin{equation}
 \label{Anvq}
h(x)=q(x) \prod_{\gamma \in \mu_d, \ \gamma \ne 1}\left(B x^{\ell}-\gamma v(x)\right).
\end{equation}
The degree of the first factor in the RHS equals $n-m+\ell$, while all the other $(d-1)$ factors have degree $\ell$ (their coefficient at $x^{\ell}$
is $B -\gamma B=B(1-\gamma)\neq0$). This implies that
$$\deg(h)=(n-m+\ell)+(d-1)\ell=n-m+\ell d=n-m+m=n.$$
 This ends the proof of assertion (b).
\end{proof}
\begin{proof}[Proof of Proposition~\ref{sep}]
Let  us consider
polynomials
$$q_C(x)=Dx^{n-m+\ell}+C,$$ where
$$C\in K, \quad  C\neq0,-D.$$
Our conditions on $D$ and $C$ imply that $q_C(x)$ is either the nonzero constant  $D+C$ if $n-m+\ell=0$,
or a polynomial of positive degree $n-m+\ell$ with leading coeffcient $D$ if $n-m+\ell>0$. In both cases
$$\deg(q_C)=n-m+\ell.$$
We need to show that for all but finitely  many $C$ the polynomial
$f_C(x)=Ax^m+\left(Bx^{\ell}+q_C(x)\right)^d$
has degree $n$ and has
no repeated roots. Using the same calculation as in the proof of \eqref{Anvq} for $h(x)=f_C(x)$ and
$$q(x)=q_C(x)=D x^{n-m+\ell}+C,$$ we get
$$f_C(x)=q_C(x) \prod_{\gamma \in \mu_d, \ \gamma \ne 1}\left(B x^{\ell}-\gamma v(x)\right)=$$
$$q_C(x) \prod_{\gamma \in \mu_d, \ \gamma \ne 1}
\left(B(1-\gamma)x^{\ell}-\gamma q_C(x)\right)
=q_C(x) \prod_{\gamma \in \mu_d, \ \gamma \ne 1} h_{\gamma,C}(x),$$
where
$$h_{\gamma,C}(x)=B(1-\gamma)x^{\ell}-\gamma q_C(x)=B(1-\gamma)x^{\ell}-\gamma D x^{n-m+\ell}-\gamma C$$
is a  polynomial of  degree $\ell\ge 2$ with leading coefficient $B(1-\gamma)$, because
$$\ell-(n-m+\ell)=m-n>0, \quad \ell=\frac{m}{d}>\frac{n}{n}=1.$$
This implies that
$$\deg(f_C)=(n-m+\ell)+(d-1)\ell=n.$$
Recall that the leading coefficient of $q_C(x)$ is $D$ when $n-m+\ell>0$; if $n-m+\ell=0$ then the leading coefficient is $D+C$.
Since the leading coefficient of a polynomial $h_{\gamma,C}(x)$ is $B(1-\gamma)$,  the leading coefficient of their product
$\prod_{\gamma \in \mu_d, \ \gamma \ne 1}h_{\gamma,C}(x)$ is
$$\prod_{\gamma \in \mu_d, \ \gamma \ne 1}B(1-\gamma)=B^{d-1}d.$$
This implies that the leading coeffient of $f_C(x)$ equals to  $dB^{d-1}D$ if $n-m+\ell>0$ and to $dB^{d-1}(D+C)$ if $n-m+\ell=0$ .
This proves (i).

Let us prove (ii).  First, notice that the constant term of $q_C(x)$  is $C$  while the constant term of  $h_{\gamma,C}(x)$ is $-\gamma C$; all  other coefficients of $q_C$ and $h_{\gamma,C}$  do {\sl not} depend on $C$. This implies that each $\alpha \in K$ may be a root of $f_C(x)$ only for finitely many $C\in K$.

Our conditions on $\fchar(K)$
imply that the derivatives of
all  $h_{\gamma,C}(x)$ are nonconstant polynomials
that do not depend on $C$.
 On the other hand, the conditions on $\fchar(K)$ and $C$ imply that either $q_C(x)$ is a nonzero constant $D+C$ (if $n-m+\ell=0$) or its derivative
 $(n-m+\ell) x^{n-m+\ell-1}$
 is a nonzero polynomial that does {\sl not} depend on $C$. So, the union $U$ of the set of repeated roots of $q_C(x)$ and the sets of repeated roots of all $h_{\gamma,C}(x)$
 lies in a certain
  finite subset of $K$ that does not depend on $C$. This implies the finiteness of  the set
 $S$ of such  $C \in K\setminus \{0,-D\}$ such that either $q_C(x)$ or $h_{\gamma,C}(x)$ (for some $\gamma$) has a repeated root.
 Hence, for all  $C \in K\setminus (\{0,-D\}\cup S)$ the polynomial $q_C(x)$  and all $h_{\gamma,C}(x)$ have no repeated roots. So, the only chance for $f_C(x)$ to have a repeated root for such $C$  is
when  either $q_C(x)$ and $h_{\gamma,C}(x)$ have a common root for some $\gamma$ or $h_{\gamma_1,C}(x)$ and $h_{\gamma_2,C}(x)$ have a common root for distinct $\gamma_1,\gamma_2 \in \mu_d \setminus \{1\}$.
Such a common root could not be $0$, because the constant terms of $q_C(x)$ and all $h_{\gamma,C}(x)$ are not $0$.

Suppose $\alpha\in K$ is a common root of
$h_{\gamma_1,C}(x)$ and $h_{\gamma_2,C}(x)$, where $\gamma_1 \ne \gamma_2$. Then $\alpha$ is a root of the polynomial
$$\frac{1}{\gamma_1}h_{\gamma_1,C}(x)-\frac{1}{\gamma_2}h_{\gamma_2,C}(x)=$$

$$\left(B\frac{1-\gamma_1}{\gamma_1}x^{\ell}-\frac{\gamma_1}{\gamma_1} D x^{n-m+\ell}-\frac{\gamma_1}{\gamma_1} C\right) -\left(B \frac{1-\gamma_2}{\gamma_2} x^{\ell}-\frac{\gamma_2}{\gamma_2} D x^{n-m+\ell}-\frac{\gamma_2}{\gamma_2} C\right)$$
$$=B\left(\frac{1}{\gamma_1}-\frac{1}{\gamma_2}\right)x^{\ell},$$
whose only root is $0$, which could not be the case. So,
$h_{\gamma_1,C}(x)$ and $h_{\gamma_2,C}(x)$ have no common roots for distinct $\gamma_1$ and $\gamma_2$.

Suppose that $\beta \in K$ is a common root of $q_C(x)$ and $h_{\gamma,C}(x)$ for some $\gamma \in \mu_d\setminus \{1\}$. Then $\beta$ is a root of
$$\begin{aligned}\gamma q_C(x)+h_{\gamma,C}(x)&=\gamma(Dx^{n-m+\ell}+C)+(B(1-\gamma)x^{\ell}-\gamma D x^{n-m+\ell}-\gamma C)\\&=B(1-\gamma)x^{\ell},\end{aligned}$$
whose only root is $0$, which is not the case. Hence, $q_C(x)$ and $h_{\gamma,C}(x)$ have no common roots.
This proves that $f_C(x)$ has no repeated roots for all but finitely many $C \in K$. This proves (ii).

It remains to prove (iii). Since $K_0$ is infinite, it follows from the  already proven assertions (i) and (ii) that there  exist infinitely many ``nice'' $C \in K_0 \setminus \{0,-1\}$ such that the polynomial
$f_C(x)\in K_0[x]$ defined in \eqref{fCk0} has degree $n$ and the desired leading coefficient but  has no repeated roots. Applying Proposition \ref{pointorder m}(b)
to $$A=-1, \  a=0, \ v(x)=x^{\ell}+x^{n-m+\ell}+C\in K_0[x]$$
with ``nice'' $C$,  we conclude that $P=(0,C)$ is a point of order $m$ in $\CC_{f_C,d}(K)$.
\end{proof}

\section{Points of order $n+ed$}
\label{order n+d points}
Throughout this section,  $K$ is a field of characteristic zero. 
 Recall   that $d<n$ and $(n,d)=1$. In this section we deal with torsion points  whose order is congruent to $n$ modulo $d$.
\begin{thm}
\label{nPluSd}
Suppose that $\fchar(K)=0$ and
 $f(x)\in K[x]$ is a monic degree $n$ polynomial without repeated roots.
Let $a$ be an element of $K$. Then the following conditions are equivalent.

\begin{itemize}
 \item[(a)]
There exists a $K$-point $P$ of $\CC_{f,d}$  of order $m_1=n+d$
with $x(P)=a$.
\item[(b)]
There exist $a_1 \in K$ and a polynomial $v(x) \in K[x]$
such that
$$a_1 \ne a, \quad d-1 \le \deg(v)<\frac{m_1}{d}=\frac{n+d}{d}, \quad v(a_1)\ne 0,$$
and
$$f(x)=\frac{(x-a)^{m_1}-v(x)^d}{(x-a_1)^d}.$$
\end{itemize}
If these equivalent conditions hold, then
$m_1=n+d >d^2-d$, i.e.,  $n>d^2-2d$.
\end{thm}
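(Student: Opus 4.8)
The plan is to show that each of (a) and (b) is equivalent to the existence of a rational function of the form $h=(x-a_1)y+v(x)$ with $\div(h)=m_1(P)-m_1(O)$, and then to read off the inequality $m_1>d^2-d$ from the degree of $v$. For $(a)\Rightarrow(b)$: given a point $P$ of order $m_1=n+d$ with $x(P)=a$, write $m_1(P)-m_1(O)=\div(h)$ with $h$ a polynomial in $x,y$ whose only pole is $O$, of multiplicity $m_1$. First I would run the pole-order bookkeeping of Section~\ref{functions}: a summand $s_\alpha(x)y^{d-\alpha}$ of $h$ has pole order $d\deg s_\alpha+n(d-\alpha)$, and $d-\alpha\ge 2$ already forces this to be $\ge 2n>n+d$; so only the $\alpha=d-1$ term can occur, with $\deg s_{d-1}\le 1$, and since $n+d\not\equiv 0\pmod d$ one must have $\deg s_{d-1}=1$ and $d\deg v<n+d$. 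After rescaling, $h=(x-a_1)y+v(x)$ with $\deg v<m_1/d$. As in Remark~\ref{Norm}, $\mathrm{Norm}(h)=\prod_{\gamma\in\mu_d}\bigl(\gamma(x-a_1)y+v(x)\bigr)=v(x)^d-(-1)^d(x-a_1)^df(x)$ has divisor $m_1\,\div(x-a)$, hence equals $(-1)^{d+1}(x-a)^{m_1}$ (compare leading coefficients); replacing $v$ by $\zeta v$ with $\zeta^d=(-1)^{d+1}$ rewrites this as $f(x)=\bigl((x-a)^{m_1}-v(x)^d\bigr)/(x-a_1)^d$.

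Next I would verify the side conditions of (b) and the crux $\deg v\ge d-1$. If $a_1=a$, then $(x-a)^d\mid v(x)^d$, so $v=(x-a)w$, $h=(x-a)(y+w(x))$, and $f=(x-a)^n-w(x)^d$; then $y+w$ has pole order exactly $n$ at $O$, so its zero divisor $D\ge 0$ satisfies $(n+d)(P)=\sum_{\gamma\in\mu_d}(\gamma P)+D$, which is impossible since the $d$ points $\gamma P$ are distinct ($y(P)\ne 0$ by Theorem~\ref{thm0}, as $\mathrm{ord}(P)\ne d$) while $D$ is effective. Hence $a_1\ne a$, and evaluating $(x-a_1)^df=(x-a)^{m_1}-v^d$ at $x=a_1$ gives $v(a_1)^d=(a_1-a)^{m_1}\ne 0$. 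For $\deg v\ge d-1$ I would reduce $(x-a)^{m_1}=v(x)^d+(x-a_1)^df(x)$ modulo $(x-a_1)^d$ and work in the local ring $R=K[x]/(x-a_1)^d$, where $v^d=(x-a)^{m_1}$ and $x-a$ is a unit. Since $x-a=(a_1-a)\bigl(1+(x-a_1)/(a_1-a)\bigr)$ with $a_1-a$ a $d$-th power in $K$ and $(x-a_1)$ nilpotent, the binomial series in characteristic $0$ produces a $d$-th root $u\in R^{\times}$ of $x-a$, whence $v=\omega\,u^{m_1}$ in $R$ for some $\omega\in\mu_d$ (a $d$-th root of unity in $R$ is a constant, again using $\fchar(K)=0$). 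The degree-$<d$ representative of $u^{m_1}$ has $(x-a_1)^{d-1}$-coefficient equal to $\binom{m_1/d}{d-1}$ times a nonzero scalar, and $\binom{m_1/d}{d-1}\ne 0$ precisely because $m_1/d\notin\mathbb{Z}$ (as $(n,d)=1$); so this representative has degree exactly $d-1$, forcing $\deg v\ge d-1$. Finally, the two monic degree-$m_1$ terms in $(x-a)^{m_1}-(x-a_1)^df(x)$ cancel, so $d(d-1)\le d\deg v=\deg(v^d)\le m_1-1$, i.e.\ $m_1\ge d^2-d+1>d^2-d$.

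For $(b)\Rightarrow(a)$: given $a_1\ne a$ and $v$ as in (b), I would first note $f(a)\ne 0$ — if $v(a)=0$ then $f$ has a simple zero at $a$, yet $(x-a)^{m_1}-v^d$ vanishes there to order $\ge\min(m_1,d)\ge 2$; and if $v(a)\ne 0$ then $f(a)=-v(a)^d/(a-a_1)^d\ne 0$ — so the fibre over $a$ consists of $d$ distinct points. Set $h=(x-a_1)y+\zeta'v(x)$ with $\zeta'^d=(-1)^{d+1}$; then $\mathrm{Norm}(h)=(-1)^{d+1}(x-a)^{m_1}$, so every zero of $h$ has abscissa $a$, and exactly one point $P$ of the fibre over $a$ is a zero of $h$ (the relevant scalar is a $d$-th root of unity, by $(a-a_1)^df(a)=-v(a)^d$). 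Since $h$ has pole order $m_1$ at $O$, $\div(h)=m_1(P)-m_1(O)$; as $P\ne O$, $y(P)\ne 0$ and $m_1<2n$, Corollary~\ref{O2n} gives $\mathrm{ord}(P)=m_1=n+d$ with $x(P)=a$. The step I expect to be the main obstacle is the lower bound $\deg v\ge d-1$ (equivalently the inequality $m_1>d^2-d$), proved by the reduction modulo $(x-a_1)^d$; everything else is pole-order bookkeeping together with the norm identity of Remark~\ref{Norm} and the order-detection Corollary~\ref{O2n}.
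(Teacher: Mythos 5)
Your proposal is correct and follows essentially the same route as the paper: the same pole-order bookkeeping to force $h=(x-a_1)y+v(x)$, the same norm identity reducing everything to $(x-a_1)^d f(x)+v(x)^d=(x-a)^{m_1}$, and the same use of the nonvanishing of the generalized binomial coefficients $\binom{m_1/d}{k}$ to get $\deg v\ge d-1$ — your extraction of a $d$-th root of $x-a$ in the Artinian ring $K[x]/(x-a_1)^d$ is just a repackaging of the paper's congruence $(1+z)^{m_1/d}\equiv W(z)\bmod z^d$ in $K[[z]]$ after the affine change of variable. The only substantive addition is that you spell out the implication (b)$\Rightarrow$(a) (via the norm computation and Corollary~\ref{O2n}), which the paper's own proof of this theorem leaves to the machinery of Proposition~\ref{point of order m}.
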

\begin{proof}
Suppose that
$P=(a,b) \in \CC_{f,d}(K)$ has order $m_1$.  Then
$$m_1(P)-m_1(O)=\div(\vf),$$
where $\vf$ is a non-constant rational function on $\CC_{f,d}$. We know (Section \ref{functions})
 that either $\vf=v(x)\in K[x]$
or
$\vf$ has the form $$\vf=s_{\alpha_1}(x)y^{d-\alpha_1}+s_{\alpha_2}(x)y^{d-\alpha_2}+\cdots+s_{\alpha_k}(x)y^{d-\alpha_k}+v(x),$$
where
all polynomials $s_{\alpha_i}(x)$ are nonzero and all $\alpha_i$ are distinct positive less than $d$. The order of the pole of $v(x)$ at $O$
is $d\deg(v)$ and the order  of the pole of $\vf$ at $O$ is $m_1$. So the equality $\vf=v(x)$ would imply that $m_1=d\deg v(x)$, which is impossible since
$m_1=n+d$ and $(n,d)=1$.
Since $n<n+d<2n$, we can write $\vf$ in the form
$$\vf=u(x)y-\mu v(x),$$ where $\mu \in K$ is a   $d$th root of $-1$,
and
$$u(x),\ v(x)\in K[x]; \quad
\deg(u)=1, \ \deg(v)<m_1/d.$$
Since $\deg(u)=1$, the polynomial $u(x)$ has precisely one root, say, $a_1$ and coincides with $\lambda(x-a_1)$ for some {\sl nonzero} $\lambda \in K$.

We have
$$0=\phi(P)=u(a)b-\mu v(a).$$
So, if $a_1=a$, then $u(a)=0$ and therefore $v(a)=0$, i.e., $v(x)=(x-a)w(x)$
for some polynomial $w(x) \in  K[x]$. This implies that $\phi=u(x)y-\mu v(x)$ vanishes at all $d$ points with abscissa $a$, which is wrong. So,
$a \ne a_1.$

Applying Remark \ref{Norm} to $\phi=u(x)y-  \mu v(x)$, we conclude that
 the polynomial
$$u(x)^d f(x)+v(x)^d=
\prod_{\gamma \in \mu_d}(u(x)y- \gamma \cdot \mu v(x))$$
coincides with $A(x-a)^m$ for some nonzero   $A\in K$.
 Since $\vf$ is defined up to a nonzero constant factor, we may and will assume that
 \beq\label{f(x)1} u(x)^d f(x)+v(x)^d =(x-a)^{m_1} \eeq
 This means that
 $$\alpha^d (x-a_1)^d f(x)+v(x)^d =(x-a)^{m_1}.$$
 Comparing the leading coefficients of both sides and taking into account that $f(x)$ is monic, we obtain that $\alpha^d=1$ and therefore
 $$ (x-a_1)^d f(x)+v(x)^d =(x-a)^{m_1}.$$

It follows that the polynomial
$(x-a)^{m_0}-v(x)^d$ of degree $m_1$
is divisible by $(x-a_1)^d$.
In particular, $v(a_1)^d=(a_1-a)^{m_1} \ne 0$.  Hence,
$v(a_1) \ne 0$.
It remains  to check that $\deg(v)\ge d-1$. In order to do that, let us put
$$w(x):=v(x+a_1) \in K[x].$$
Then
$$w(0) \ne 0, \quad  \deg(w)=\deg(v),$$
and
$(x-a+a_1)^{m_1}-w(x)^d$ is divisible by $x^d$. The divisibility condition means that
$$\left(1+\frac{x}{(-a+a_1)}\right)^{m_1}-\frac{w(x)^d}{(-a+a_1)^{m_1}}=
\left(1+\frac{x}{(-a+a_1)}\right)^{m_1}-\left(\frac{w(x)}{(-a+a_1)^{m_1/d}}\right)^d$$
is divisible by $x^d$. Let us introduce a new variable
$$z=\frac{x}{-a+a_1}$$
and consider a polynomial
$$W(z):=\frac{w\left((-a+a_1)z\right)}{(-a+a_1)^{m_1/d}} \in K[z].$$
Obviously,
$$W(0)=\frac{w(0)}{(-a+a_1)^{m_1/d}} =\frac{v(a_1)}{(-a+a_1)^{m_1/d}}  \ne 0, \quad \deg(W)=\deg(w)=\deg(v),$$
and $z^{m_1}-W(z)^d$ is divisible by $z^d$ in the ring of polynomials $K[z]$.
So, it suffices to check that
$$\deg(W) \ge d-1.$$

Indeed,
 the polynomial
$$ ((1+z)^{m_1/d})^d-W(z)^d$$ is divisible by $z^d$ in $K[z]$ and therefore
in the ring  of power series $K[[z]]$ as well. Since
\beq\label{root} ((1+z)^{m_1/d})^d-W(z)^d=\prod_{\gamma \in \mu_d}((1+z)^{m_1/d}-\gamma W(z)),\eeq
 and only the first factor on the right-hand side of \eqref{root} is divisible by $z$, we obtain that this factor is divisible by $z^d$. Hence,
$$(1+z)^{m_1/d}\equiv W(z)\mod z^d.$$
Since $m_1/d$ is {\sl not} an integer,  all {\sl generalized binomial} coefficients
$\binom{m_1/d}{k}$ in the expansion of $(1+z)^{m_1/d}$ are nonzero rational numbers. This implies that all the coefficients
of $W(z)$ up to the $(d-1)$th power are nonzero, i.e., the degree of $W(z)$ is at least $d-1$. Hence,
$$\deg(v)=\deg(W) \ge d-1.$$
Since
$$d-1 \le \deg(v)<m_1/d,$$ we get
$$d^2-d<m_1=n+d.$$
\end{proof}

\begin{prop}\label{point of order m}
Let $e$ be a positive integer and $m:=n+ed$.
Assume that
$$n+ed>d(de-1).$$ 
Let $v(x) \in K[x]$ be a polynomial that enjoys the following properties.
\begin{itemize}
 \item
 $\deg(v)<m/d=(n+ed)/d$.
 \item
 The polynomial $(1+x)^m-v(x)^d$ is monic, has distinct nonzero roots and  root $0$ of multiplicity $ed$ or $ed+1$.
\end{itemize}

Then the following statements hold.
\begin{itemize}
\item[(i)]
$$f(x):=\frac{(1+x)^m-v(x)^d}{x^{ed}}$$ is a degree $n$ polynomial without repeated roots and the corresponding $\CC_{f,d}(K)$ has a torsion point $P$  with abscissa $-1$,
whose order is $m$.
\item[(ii)]
Suppose that all the coefficients of $v(x)$ lie in a subfield $K_0$ of $K$.  Then
$$-f(x) \in K_0[x]\subset K[x], \ \deg(-f)=n,$$
the polynomial $-f(x)$ has no repeated roots, and its leading coefficient is $-1$.

In addition,  $Q=\left(-1,(-1)^e v(-1))\right)$ is a torsion $K_0$-point of order $m$ on the curve $\CC_{f,d}$.
\end{itemize}
\end{prop}

\begin{proof}
The proof follows word for word a similar statement in Sec.\ref{W^1}, Proposition \ref{pointorder m}. Indeed,
assume that the polynomial $(1+x)^m-v(x)^d$ is divisible by $x^{ed}$ and has distinct nonzero roots while $0$ is the root of multiplicity $ed$ or $ed+1$.
Then
$$f(x)=\frac{(1+x)^m-v(x)^d}{x^{ed}}$$
is a polynomial without repeated roots whose degree is
$$\deg(f)=\deg\left((1+x)^m-v(x)^d\right)-ed.$$
On the other hand,
$$\deg\left(v(x)^d\right)=d \cdot \deg(v) <d \cdot  \frac{m}{d}=m=\deg((1+x)^m).$$
Hence,
$$\deg\left((1+x)^m-v(x)^d\right)=\deg\left((1+x)^m\right)=m,$$
and the leading coefficient of $(1+x)^m-v(x)^d$ is the same as that of $(1+x)^m$, i.e., is
$1$.
Therefore
$$\deg(f)=m-ed=(n+ed)-ed=n$$
and the leading coefficient of $f(x)$ is $1$, i.e., $f(x)$ is monic.
We have
\begin{equation}
\label{f1L}
x^{ed}f (x)+v^d(x)=(1+x)^m.
\end{equation}
Choose $\lambda \in K$ such that
$$\lambda^d=-1$$ and let us put
 $$c=\lambda (-1)^e v(-1).$$
Then the point $P=(-1,c)$ lies on the curve $y^d=f(x)$, since it follows from \eqref{f1L} that
$$(-1)^{ed} f(-1)=0^m-v(-1)^d=-v(-1)^d=\lambda^d v(-1)^d$$
and therefore
$$f(-1)=(-1)^{ed} \lambda^d v(-1)^d=\left( (-1)^e \lambda v(-1)\right)^d=c^d.$$
In addition, $P$ is a zero of  the rational function $\phi:=x^e y-\lambda v(x)$, because
$$(-1)^e \cdot \lambda (-1)^e v(-1)-\lambda v(-1)=\lambda v(-1)-\lambda v(-1)=0.$$
We will prove that $P$ has order $m$ (then we get $d$ points of order $m$, namely, all the points
$\gamma P=(a,\gamma c), \; \gamma \in \mu_d$).
It follows from
$$(x+1)^m=x^{ed}y^d+ v(x)^d=\prod_{\gamma \in \mu_d}\left(x^ey-\gamma \lambda v(x)\right)$$
that all zeros of  $x^ey-\lambda v(x)=\phi$ have abscissa $-1$. But each point on the curve with abscissa $-1$
has the form $(-1,\gamma c)$ with $\gamma \in \mu_d$. Assume that for some $\gamma \ne 1$ the point $\gamma P=(-1,\gamma c)$
is also a zero of  $x^ey-\lambda v(x)$. Then
$$\begin{aligned}0=\phi(\gamma P)=(-1)^e (\gamma  c)-\lambda v(-1)\\=(-1)^e \gamma (-1)^e \lambda v(-1)) -\lambda v(-1)=(\gamma-1)\lambda v(-1),\end{aligned}$$
hence,  $v(-1)=0$, which is wrong.
 Therefore, $x^ey-\lambda v(x)$
has a single zero, namely $P$.
Since $x^e y-\lambda v(x)$ has the only pole at $O$ and its multiplicity is $m$
(because  $O$ is the pole of  $x^e y$ of order $ed+n=m$ and the pole of $v(x)$ of order
$d \deg(v)<m$), we obtain that
$$m(P)-m(O)=\div\left(x^e y-\lambda v(x)\right)$$
is a principal divisor.
So $P$ has order $>1$ that divides $m$.
In order to apply Corollary \ref{O2n}, it suffices to check that either $m<2n$ or $m$ is odd and strictly less than $3n$. Let us do it.
Indeed, we have
$$m=n+ed>d(ed-1),$$
i.e.,
$$n>(d-1)de-d, \quad n+d>(d-1)ed, \quad ed<\frac{n+d}{d-1}.$$
If $d>2$, then
$$d \ge 3, \ d-1\ge 2, \quad ed < \frac{n+d}{d-1} < \frac{n+n}{2}=n,$$
and therefore
$$m=n+ed<n+n=2n.$$

If $d=2$, then
$$ed<\frac{n+d}{d-1}=n+2,$$ and therefore $ed \le n+1$. Since $(n,d)=1$, we get $ed \ne n$, so, either
$ed<n$ or $ed=n+1$. In the former case
$$m=n+ed<n+n=2n.$$
If $ed=n+1$, then
$$m=n+ed=n+(n+1)=2n+1$$
is an odd integer that is strictly less than $3n$. So, we are in position to apply
Corollary \ref{O2n}, which implies that $P$ has order $m$. This proves (i).

In order to prove (ii), let us consider the  $\mu_d$-equivariant $K$-isomorphism of curves
$$\Psi: \CC_{-f,d}\to \CC_{f,d}, \quad  (x,y) \mapsto (x, \lambda y),$$
which sends $O_{-f,d}$ to $O_{f,d}$ and
$$Q=\left(-1,(-1)^e v(-1))\right)\in \CC_{-f,d}(K_0)\subset  \CC_{-f,d}(K)$$
 to
$$P=\left(-1, \lambda (-1)^e v(-1))\right)\in \CC_{f,d}(K)$$
 (recall that $\lambda^d=-1$).
Clearly, $\Phi$ respects the orders of points. Since the order of $P$ is $m$,
the order of $Q$ is also $m$ and therefore $Q$ is a point of order $m$ in $\CC_{-f,d}(K_0)$.

\end{proof}

\section{Polynomial Algebra}
\label{powerSeries}
As above, $d \ge 2$ is an integer. Let $m$ be an integer such that $(d,m)=1$.
Let us denote the noninteger rational number $m/d$ by $r$.
Then  both $r$ and $r-1=(m-d)/d$ are  rational numbers that are {\sl not} integers. Hence, for every nonnegative integer $k$
the corresponding generalized binomial coefficients
$$\binom{r}{k}=\frac{r(r-1) \dots (r-k+1)}{k!} \quad \text{and} \quad \binom{r-1}{k}$$
are {\sl not} $0$.
 We will need Pascal's identity
\begin{equation}
\label{pascal}
\binom{r}{k}=\binom{r-1}{k}+\binom{r-1}{k-1}
\end{equation}
for all integers $k\ge 1$. Let us consider the formal power series
$$(1+x)^r=\sum_{k=0}^{\infty} \binom{r}{k} x^k, \quad (1+x)^{r-1}=\sum_{k=0}^{\infty} \binom{r-1}{k} x^k
\in K[[x]].$$
We will also need the following well known equalities in $K[[x]]$:
\begin{equation}
\label{rPLUS1}
\begin{aligned}
\left((1+x)^r\right)^d=(1+x)^{rd}=(1+x)^m, \\ \left((1+x)^{r-1}\right)^d=(1+x)^{(r-1)d}=(1+x)^{m-d},\\
 \quad (1+x)^{r}=(1+x) (1+x)^{r-1}. \end{aligned}
\end{equation}

 Let $E \ge 2$ be an integer and let us consider the ``truncation''  $V_{r,E}(x)$  of $(1+x)^r$ up to  first $E$ terms,
i.e.,
$$V_{r,E}(x) =\sum_{k=0}^{E-1}   \binom{r}{k}\cdot x^k  =1+rx+ \dots + \frac{r(r-1) \dots (r-E+2)}{(E-1)!} x^{E-1}.$$
Clearly, $V_{r,E}(x)$ is a polynomial with rational coefficients and
\begin{equation}
\label{degE}
\deg(V_{r,E})=E-1.
\end{equation}
It follows from \eqref{rPLUS1} combined with \eqref{pascal} that 
\begin{equation}
\label{VrE}\begin{aligned}
V_{r,E}(x)=(1+x)V_{r-1,E-1}(x)+ \left( \binom{r}{E}-  \binom{r-1}{E-1}\right) x^{E-1}\\
=(1+x)V_{r-1,E-1}(x)+ \binom{r-1}{E}x^{E-1}.
\end{aligned}\end{equation}
Hence $V_{r,E}(x)-(1+x)V_{r-1,E-1}(x)$ is a polynomial of degree $E-1$, because $\binom{r-1}{E} \ne 0$.

Taking into account that the derivative of $(1+x)^r$ equals $r (1+x)^{r-1}$, we obtain that
\begin{equation}
\label{Der}
V_{r,E}^{\prime}(x)=r V_{r-1,E-1}(x).
\end{equation}

 \begin{thm}\label{distinctroots}
 Suppose that
 $$m>d(E-1).$$
  Then the polynomial
 $$F(x)=F_{r,E}(x)=(1+x)^m - V_{r,E}(x)^d$$
  has degree $m$ and has no nonzero repeated roots. On the other hand, $0$ is a root of $F(x)$ with multiplicity $E$. In other words,
 $$f(x)=f_{r,E}(x):=\frac{F(x)}{x^E}=\frac{(1+x)^m - V_{r,d}(x)^d}{x^E}\in \mQ[x]\subset K[x]$$
 is a polynomial of positive degree $m-E$ without repeated roots and with nonzero constant term.
  \end{thm}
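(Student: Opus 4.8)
The plan is to prove the three assertions of the theorem — that $F$ has degree $m$, that $0$ is a root of $F$ of multiplicity \emph{exactly} $E$, and that $F$ has no nonzero repeated roots — in that order, the last being the only one that needs real work.

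For the degree I would simply observe that, by \eqref{degE}, $\deg V_{r,E}=E-1$, so $V_{r,E}(x)^d$ has degree $d(E-1)$, which is strictly smaller than $m$ by hypothesis; hence $(1+x)^m$ supplies the leading term and $\deg F=m$ with leading coefficient $1$. Along the way one notes that $m>d(E-1)\ge 2(E-1)$ forces $m\ge 2E-1$, so once the multiplicity statement is established we get $\deg f=m-E\ge E-1\ge 1$, i.e.\ $f$ has positive degree.

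For the order of vanishing at $0$ I would work in $K[[x]]$. By \eqref{rPLUS1}, $(1+x)^m=\bigl((1+x)^r\bigr)^d$, so $F(x)=\bigl((1+x)^r\bigr)^d-V_{r,E}(x)^d=\prod_{\gamma\in\mu_d}\bigl((1+x)^r-\gamma V_{r,E}(x)\bigr)$, the factorization being valid in $K[[x]]$ because $V_{r,E}$, having constant term $1$, is invertible there. For $\gamma\neq 1$ the factor $(1+x)^r-\gamma V_{r,E}(x)$ has constant term $1-\gamma\neq 0$ and is a unit; for $\gamma=1$ the factor is $\sum_{k\ge E}\binom{r}{k}x^k=x^E\bigl(\binom{r}{E}+\binom{r}{E+1}x+\cdots\bigr)$, which is $x^E$ times a unit since $\binom{r}{E}\neq 0$. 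Multiplying, $F=x^E\cdot(\text{unit in }K[[x]])$, so $\operatorname{ord}_{0}F=E$; as $F\in K[x]$, this means $0$ is a root of $F$ of multiplicity exactly $E$, whence $f=F/x^E\in K[x]$ has $f(0)\ne 0$ and $\deg f=m-E$.

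The hard part is showing $F$ has no nonzero repeated root. I would differentiate, using \eqref{Der} and the relation $dr=m$, to obtain $F'(x)=m(1+x)^{m-1}-dV_{r,E}(x)^{d-1}V_{r,E}'(x)=m\bigl((1+x)^{m-1}-V_{r,E}(x)^{d-1}V_{r-1,E-1}(x)\bigr)$. Suppose $\alpha\in K$, $\alpha\ne 0$, satisfies $F(\alpha)=F'(\alpha)=0$. I would first exclude $\alpha=-1$: evaluating \eqref{VrE} at $x=-1$ gives $V_{r,E}(-1)$ equal to a nonzero scalar times $(-1)^{E-1}$, hence $F(-1)=-V_{r,E}(-1)^d\ne 0$. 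So $1+\alpha\ne 0$; then $F(\alpha)=0$ gives $V_{r,E}(\alpha)^d=(1+\alpha)^m\ne 0$ (so $V_{r,E}(\alpha)\ne 0$), and $F'(\alpha)=0$ together with $m\ne 0$ in $K$ gives $(1+\alpha)^{m-1}=V_{r,E}(\alpha)^{d-1}V_{r-1,E-1}(\alpha)$ (so $V_{r-1,E-1}(\alpha)\ne 0$). Dividing the two relations yields $(1+\alpha)V_{r-1,E-1}(\alpha)=V_{r,E}(\alpha)$, which contradicts \eqref{VrE}: that identity says $V_{r,E}(x)-(1+x)V_{r-1,E-1}(x)$ is a nonzero scalar multiple of $x^{E-1}$, so it cannot vanish at $\alpha\ne 0$. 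Hence $F$ has no nonzero repeated root, and therefore $f=F/x^E$, whose roots are exactly the nonzero roots of $F$, has no repeated roots. The only thing to watch is that the binomial coefficients $\binom{r}{E}$, $\binom{r-1}{E-1}$, and the like are genuinely nonzero in $K$ — which is precisely the point recorded just before the statement, using $\fchar(K)=0$ and that $r$, $r-1$ are non-integral because $(d,m)=1$ and $d\ge 2$.
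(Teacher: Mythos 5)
Your proof is correct and follows the same skeleton as the paper's: the degree claim by comparing $\deg V_{r,E}^d=d(E-1)<m$; the multiplicity $E$ at $0$ by factoring $\left((1+x)^r\right)^d-V_{r,E}(x)^d$ over $\mu_d$ in $K[[x]]$, where the $\gamma=1$ factor is $x^E$ times a unit (because $\binom{r}{E}\ne 0$) and the other factors are units; and the absence of nonzero repeated roots via $F'$, the identity \eqref{Der}, and the identity \eqref{VrE}.

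The one place you genuinely diverge is the treatment of the point $-1$. The paper deduces from $F(\beta)=F'(\beta)=0$ with $\beta\ne 0$ that $\beta$ is a common root of $V_{r,E}$ and $(1+x)^m$, hence $\beta=-1$, and then rules this out with a $p$-adic valuation argument: for a prime $p\mid d$ the terms of $V_{r,E}(-1)=\sum_k\binom{r}{k}(-1)^k$ have strictly decreasing $p$-adic orders, so the sum is not a $p$-adic integer and in particular is nonzero. You instead evaluate \eqref{VrE} at $x=-1$, where the $(1+x)V_{r-1,E-1}(x)$ term vanishes and $V_{r,E}(-1)$ collapses to a single nonzero generalized binomial coefficient times $(-1)^{E-1}$. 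This shows at once that $F(-1)=-V_{r,E}(-1)^d\ne 0$, so $-1$ need not be considered at all; and for $\alpha\ne 0,-1$ your two relations $(1+\alpha)^m=V_{r,E}(\alpha)^d$ and $(1+\alpha)^{m-1}=V_{r,E}(\alpha)^{d-1}V_{r-1,E-1}(\alpha)$ divide to give $(1+\alpha)V_{r-1,E-1}(\alpha)=V_{r,E}(\alpha)$, contradicting the fact that $V_{r,E}(x)-(1+x)V_{r-1,E-1}(x)$ is a nonzero scalar times $x^{E-1}$. This is logically the same contradiction the paper extracts from its auxiliary polynomial $G(x)$, but your evaluation at $-1$ replaces the $p$-adic argument by a one-line observation and is the cleaner route; both, of course, rely on $\fchar(K)=0$ so that the generalized binomial coefficients of the non-integral $r$ and $r-1$ are nonzero in $K$.
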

\begin{proof}
Taking into account that $E \ge 2$ and $d \ge 2$, we get
$$m-E>d(E-1)-E=(d-1)E-d \ge 2(d-1)-d=d-2 \ge 0,$$
which implies that
$$m-E>0, \quad \text{i.e.,} \quad m-E \ge 1.$$
We have
$$\deg(V_{r,E}^d)=d \deg(V_{r,E})=d(E-1)<m=\deg\left((1+x)^m\right).$$
This implies that
$$\deg(F)=\deg\left((1+x)^m\right)=m.$$
Suppose that   $F(x)$ has  a nonzero repeated root, say, $\beta$. Then $\beta$ is also a root of the derivative
\beq\label{Fder}\begin{aligned}F^{\prime}(x)&=m(1+x)^{m-1}-d V^{\prime}_{r,E}(x) V_{r,E}(x)^{d-1}\\ &=
m(1+x)^{m-1}-d   r \cdot V_{r-1,E-1}(x)  V_{r,E}(x)^{d-1}\\&= m(1+x)^{m-1}-m  V_{r-1,E-1}(x)  V_{r,E}(x)^{d-1}\\
&=m\left((1+x)^{m-1}-   V_{r-1,E-1}(x)  V_{r,E}(x)^{d-1}\right).\end{aligned}\eeq
Multiplying the last expression in \eqref{Fder} by $(1+x)/m$, we get that $\beta$ is a root of
$$H(x)=(1+x)^m -  (1+x) V_{r-1,E-1}(x) V_{r,E}(x)^{d-1}.$$
Hence, $\beta$ is a root of
$$\begin{aligned}&G(x)=H(x)-F(x)=V_{r,E}(x)^d-(1+x) V_{r-1,E-1}(x) V_{r,E}(x)^{d-1}=\\&V_{r,E}(x)^{d-1} \left(V_{r,E}(x)-(1+x) V_{r-1,E-1}(x)\right)
=  V_{r,E}(x)^{d-1}\binom{r-1}{E}x^{E-1} \neq 0 \end{aligned}$$
(here we used \eqref{VrE}).
Since $\beta\neq 0$, it
 is a root of $V_{r,E}(x)$.
 Since $\beta$ is a root of $F(x)$, it is also a root of $(1+x)^m$, i.e., $\beta=-1$.
We need to check that $-1$ cannot be a root of $V_{r,E}(x)$ and $0$ is a root of  $F_{r,E}(x)$ with multiplicity $E$.

 We have
 \begin{equation}
 \label{pAdic}
 V_{r,E}(-1)=\sum_{k=0}^{E-1}  \frac{r(r-1) \dots (r-k+1)}{k!}  \cdot (-1)^k.
 \end{equation}
  Let $p$ be a prime dividing $d$. Then $p$ does {\sl not} divide $m$, since $(m,d)=1$. Recall that $r=m/d$. It follows that
all terms in the RHS of \eqref{pAdic} have distinct $p$-adic orders that strictly decrease and are negative (except the first term). This implies that
$V_{r,E}(-1)$ is not a $p$-adic integer and therefore is not $0$. Hence, $-1$ is not a zero of $ V_{r,E}(x)$. The obtained contradiction proves
that $F(x)$ has no nonzero repeated roots.

It remains to check that  $F_{r,E}(x)$ has a zero of order $E$ at $0$. 
In order to prove it, we need to carry out computations in $K[[x]]$.  Namely,
$$(1+x)^m-V_{r,E}(x)^d=\left((1+x)^r\right)^d-V_{r,E}(x)^d=$$
$$\left((1+x)^r-V_{r,E}(x)\right)
\prod_{\gamma \in \mu_d, \ \gamma \ne 1}\left((1+x)^r-\gamma V_{r,E}(x)\right).$$

In this product the first factor is a power series that starts with (nonzero) term
$\binom{r}{E} x^E$,
while the other $(d-1)$ factors start with nonzero constant term $1-\gamma$ and therefore are invertible elements of $K[[x]]$. This implies that the product is divisible in $\mQ[[x]]$ by $x^E$ but not by $x^{E+1}$.
It follows that
$$(1+x)^m-V_{r,E}(x)^d=\left((1+x)^r\right)^d-V_{r,E}(x)^d$$
is divisible in $\mQ[[x]]$ by $x^E$ but not by $x^{E+1}$.
Taking into account that
$$K[x] \cap x^E K[[x]]=x^E K[x] \quad \text{ and } \quad K[x] \cap x^{E+1} K[[x]]=x^{E+1} K[x],$$
we conclude that
the polynomial   $(1+x)^m-V_{r,E}(x)^d$  is divisible in $K[x]$ by $x^E$ but not by $x^{E+1}$, which ends the proof.
\end{proof}
\section{Points of order $n+ed$ again}
\label{endMain}
\begin{proof}[Proof of Theorem \ref{mainT}(4)]
Combining Proposition \ref{point of order m} (applied to $e=1$ and $v(x)=V_{r,E}(x)$) and Theorem \ref{distinctroots} (applied to $E=d$), we obtain that
$(n+d)$ is $(n,d)$-reachable over $\mQ$  (and therefore over any field of characteristic zero) if $m=n+d>d^2-d$, i.e., $n>d^2-2d$. On the other hand, it follows from  Theorem \ref{nPluSd}
 that if $(n+d)$ is $(n,d)$-reachable over $K$ then 
$n>d^2-2d$.
\end{proof}
 \begin{proof}[Proof of Theorem \ref{mainT}(5)]
Combining Proposition \ref{point of order m} (applied to  $v(x)=V_{r,E}(x)$ and $K_0=\mQ$) and Theorem \ref{distinctroots} applied to $E=ed$, we obtain that
$m=n+ed$ is $(n,d)$-reachable over $\mQ$ (and therefore over any field of characteristic zero) if $n+ed>d(ed-1)$,
i.e., $n>ed^2-(e+1)d$.
\end{proof}

\end{document}